\newtheorem{lemma}{Lemma}
\newtheorem{definition}{Definition}
\newtheorem{theorem}{Theorem}
\newtheorem{remark}{Remark}
\def\real{{\mathord{{\rm I\kern-2.8pt R}}}}        
\def\inte{{\mathord{{\rm I\kern-2.8pt N}}}}
\def\sZZ{{\rm Z\kern-2.8ptem{}Z}}
\def\z{{\mathchoice
  {\sZZ}
  {\sZZ}
  {\rm Z\kern-0.30em{}Z}
  {\rm Z\kern-0.25em{}Z} }}
\def\sQQ{{\kern 0.27em \vrule height1.45ex width0.03em depth0em
          \kern-0.30em \rm Q}}
\def\qu{{\mathchoice
    {\sQQ}
    {\sQQ}
  {\kern 0.225em \vrule height1.05ex width0.025em depth0em \kern-0.25em \rm Q}
  {\kern 0.180em \vrule height0.78ex width0.020em depth0em \kern-0.20em \rm Q}
        }}
\def\sCC{{\kern 0.27em \vrule height1.45ex width0.03em depth0em
          \kern-0.30em \rm C}}
\def\complex{{\mathchoice
    {\sCC}
    {\sCC}
  {\kern 0.225em \vrule height1.05ex width0.025em depth0em \kern-0.25em \rm C}
  {\kern 0.180em \vrule height0.78ex width0.020em depth0em \kern-0.20em \rm C}
        }}
\newcommand{\ba}{\begin{array}}
\newcommand{\ea}{\end{array}}
\newcommand{\be}{\begin{equation}}
\newcommand{\ee}{\end{equation}}
\newcommand{\bea}{\begin{eqnarray}}
\newcommand{\eea}{\end{eqnarray}}
\newcommand{\beaa}{\begin{eqnarray*}}
\newcommand{\eeaa}{\end{eqnarray*}}
\def\b{\beta}
\def\z{\zeta}
\font\tenmath=msbm10 \font\sevenmath=msbm7 \font\fivemath=msbm5
\def \b{\noindent}
\def \={{\buildrel {\rm (law)} \over =}}
\def\qed{ \hfill \vrule width.25cm height.25cm depth0cm\smallskip}
\newcommand{\basa}{\begin{assumption}}
\newcommand{\easa}{\end{assumption}}
\newcommand{\bas}{\begin{assum}}
\newcommand{\eas}{\end{assum}}
\def\limsup{\mathop{\overline{\rm lim}}}
\newcommand{\ignore}[1]{}
\begin{document}
\title[Explosion time in SDE driven by fBm]{On explosion time in stochastic differential equations driven by fractional Brownian motion}

\author[J. Garz\'on]{Johanna Garz\'on}
\address{Universidad Nacional de Colombia, Departamento de Matem\'aticas, Colombia}
\email{mjgarzonm@unal.edu.co}

\author[J.A. Le\'on]{Jorge A. Le\'on}
\address{Cinvestav-IPN, Departamento de Control Autom\'atico, Mexico}
\email{jleon@ctrl.cinvestav.mx; joleon@cinvestav.mx }

\author[S Torres]{Soledad Torres} 
\address{Universidad de Valparaiso, CIMFAV, and Universidad Central de Chile,  Chile}
\email{soledad.torres@uv.cl}

\author[C.A. Tudor]{Ciprian A. Tudor}
\address{Universit\'e de Lille 1, UFR Math\'ematiques, France}
\email{ciprian.tudor@univ-lille.fr}

\author[L. Viitasaari]{Lauri Viitasaari}
\address{Aalto University School of Business, Department of Information and Service Management, Finland}
\email{lauri.viitasaari@iki.fi}


\begin{abstract}
In this article, we study the explosion time of the solution to autonomous stochastic differential equations driven by the fractional Brownian motion with Hurst parameter $H>1/2$. With the help of the Lamperti transformation, we are able to tackle the case of non-constant diffusion coefficients not covered in the literature. In addition, we provide an adaptive Euler-type numerical scheme for approximating the explosion time.
\end{abstract}
\maketitle
\vskip0.3cm
{\bf 2020 AMS Classification Numbers: 65C30, 65L20, 60H10, 60G22} 
\vskip0.3cm
{\bf Key Words and Phrases}:  Stochastic differential equations, explosion time, Fractional Brownian motion,  Lamperti transformation, Euler scheme.

\section{Introduction}
Fractional Brownian motion (fBm), with Hurst parameter $H$, denoted by $B^{H}$, is a centered Gaussian process with covariance $R_{H}(t,s)= \mathbf{E}(B^{H}({t}) B^{H}({s}))= \frac{1}{2}\left( t^{2H}+ s^{2H} -\vert t-s\vert ^{2H}\right)$ for $t,s\in [0,T]$ and $H\in (0,1)$.  In the case $H=\frac{1}{2}$ the fBm reduces to the standard Wiener process. Analysis of the fBm, stochastic integration concerning it, and stochastic differential equations (SDEs) driven by it; fBm has constituted an important research direction in probability theory during the last few decades. This is due to the fact that fBm is perhaps the simplest generalization of the standard Brownian motion having many wanted features such as Gaussianity, self-similarity, and stationarity of the increments. Contrary to the standard Brownian motion, however, fBm does not have independent increments, allowing its usage in modeling memory effects.
Let us consider the following scalar fractional SDE
\begin{equation}
\label{1}
X({t})= x({0}) + \int_{0} ^{t} b(X(s)) ds + \int_{0} ^{t} \sigma (X(s)) dB^{H}({s}), \hskip0.5cm t\in [0,T],
\end{equation}
where $X(0)= x(0) >0$, the coefficients $b$ and $\sigma$ satisfy some regularity conditions (detailed later on in Section \ref{sec:explosion}),  and $(B^{H}_{t}) _{t\in [0,T]} $ is a fractional Brownian motion.  In the case of standard Brownian motion ($H=\frac12$) it is well-known that the solution of the stochastic equation (\ref{1}) may explode in a finite time when the coefficient $b$ does not satisfy the linear growth assumption. That is, the trajectories of the solution to (\ref{1}) may approach infinity as $t$ approaches a certain finite time $T_{e}$. As the SDE is random, the explosion time $T_{e}$ is also random and depends on the particular path. Whether such explosion happens for a given coefficient function $b$ is completely understood and characterized, see \cite{KS} or \cite{RW} for a detailed presentation. 

The motivation for the study of exploding SDEs comes from applications. Indeed, SDEs with explosions are applied, for example, in stochastic models with fatigue cracking of materials, where the explosion time corresponds to the time of ultimate damage or fatigue failure in the material (see \cite{groisman, zhang} and the references therein). On the other hand, in \cite{zheng}, the authors claim that modeling this process as a Markov diffusion process requires a short correlation time of the noise term, which may not be realistic in some situations. 
Therefore, it is natural to take a Gaussian long memory process as the noise with Hurst parameter $H >1/2$, suggesting that a fractional SDE will be a good model for this type of problem. 

There are only a few results concerning fractional SDEs with explosion times. In \cite{leon}, the authors provided necessary and sufficient conditions for the solution to \eqref{1} explode in finite time in the case of constant diffusion $\sigma$. More precisely, in \cite{leon}, it was proven that if $b$ is a positive, locally Lipschitz, and non-decreasing function and $\sigma=1$, then the solution to \eqref{1} explodes in finite time with probability $1$ if and only if
$\int_{x(0)}^{\infty}\frac{1}{b(s)}ds < \infty.$  In a related work, \cite{XZL}  the authors studied the existence and  uniqueness of solutions  to the fractional SDE with non-Lipschitz coefficients. They provided criteria for explosion under the assumption that the diffusion is close to zero (in a certain sense).  Finally, we mention \cite{narita} where sufficient conditions for the nonoccurrence of explosion for the  solution to (\ref{1}) are given in the case that $H>\frac{1}{2}$ and the stochastic integral with respect to fBm is understood in the Wick (non-pathwise) sense. In \cite{narita}, the author also  gives examples of explosive solutions. For concrete examples of explosion for pathwise fractional SDEs, see also \cite{HN} or \cite{GLT}. However, to the best of our knowledge, there does not exist a general criterion for the explosion in the case of fractional pathwise SDEs under a more general diffusion coefficient $\sigma$.

Analysis of SDEs \eqref{1} also requires numerical methods as, in practice, one cannot simulate/solve \eqref{1} in continuous time. Numerical solutions based on the discretization of stochastic differential equations (SDEs) driven by the classical  Brownian motion or, more generally,  by semimartingales have been widely analyzed and are already well-understood. For details on various situations and techniques, we refer to the monograph \cite{KP} (see also \cite{KoPr}). For results on various Euler, Milstein, or Crank-Nicholson  schemes in the case of fractional SDEs, see e.g. \cite{GA,GN,mishura,No,NoNe}, among others. However, it is worth to emphasizing that the development of numerical schemes for SDEs with an explosion is much more complicated. Indeed, one cannot use simple deterministic discretizations because when one approaches the explosion time, the time steps must be adapted to the sudden rapid growth of the solution. 

The main contribution of this article is a method to approximate the explosion time of solution to \eqref{1}, where $H>\frac12$ and the stochastic integral is understood in the pathwise sense. We begin by providing criteria for the explosion in the case of a more general diffusion coefficient $\sigma$ than constant (studied in \cite{leon}) or small (studied in \cite{XZL}). To do this, we apply the Lamperti transform, allowing us to treat $\sigma$ as constant, after which the Osgood criterion for explosion provided in \cite{leon} is at our disposal. For the approximation scheme, we follow the ideas of \cite{acosta} and \cite{groisman} to construct an adaptive Euler scheme that reproduces the explosion of the solution. For this purpose, however, we do not have a martingale structure at our disposal, so we need to prove certain key technical estimates via other means. We also present a simple algorithm for approximating of the explosion time based on a prediction formula for the fBm derived recently in \cite{So-Vi}.

The rest of the article is organized as follows: Section \ref{sec:explosion} recalls some preliminaries on fractional SDEs \eqref{1} and provides criteria for explosion. In Section \ref{sec:scheme}, we introduce an adaptive Euler scheme and present our main theorem on its convergence, cf. Theorem \ref{Th aproxY}. In Section \ref{sec:numerical}, we illustrate our method with numerical examples. We end the paper with an appendix containing the proof of Theorem \ref{Th aproxY}.

\section{Fractional SDEs with explosion}
\label{sec:explosion}

In Sections \ref{sec:fractional-calculus}-\ref{sec:SDE}, we recall some preliminaries on fractional calculus and corresponding integration with respect to fBm that allow us to study fractional SDEs. We continue with Section \ref{sec:explosion-criteria}, where we recall the explosion criteria of \cite{leon} that, when combined with the Lamperti transform, gives us our generalized explosion criteria, Theorem \ref{teo_explosion}. Interesting examples are gathered in section \ref{sec:examples}. For details on our preliminaries, see e.g. \cite{KS,nualart2,Za}.


\subsection{Elements of the fractional calculus}
\label{sec:fractional-calculus}
The fractional integration theory below is valid for H\"older continuous functions and even beyond, see, e.g. \cite{HTV}, but for our purposes, we will restrict ourselves to the case of the fBm. Let now $(B^{H}({t})) _{t \ge 0}$   be an fBm with Hurst parameter $H\in (\frac{1}{2}, 1)$. For notational simplicity, here and in the sequel, we omit the superscript $H$ and simply write $B$ instead of $B^H$ whenever confusion cannot arise. We begin by recalling the H\"older continuity of $B$.
\begin{lemma}
\label{tckolmogorov}
For each $T>0$ and $0<\rho<H$ there is a non-negative random variable $F_{B}=F_{B}({\omega}, \rho, T)$ such that $\mathbf{E}(F_B^p)< \infty$ for all $p\geq 1$ and
\begin{equation*}
\label{ckolmogorov}
|B(t)-B(s)|< F_B|t-s|^{H-\rho} \  \  \text{a.s.} \ \ \ t,s\in[0,T].
\end{equation*}
\end{lemma}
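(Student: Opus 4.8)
The statement is the standard Kolmogorov--Chentsov Hölder regularity for $B$, and the plan is to derive it from the Garsia--Rodemich--Rumsey (GRR) inequality, which conveniently delivers both the pathwise bound and the integrability of the Hölder constant at once. The starting point is the variance of the increments: from the covariance $R_H$ recalled in the Introduction,
\[
\mathbf{E}\big[(B(t)-B(s))^2\big]=R_H(t,t)+R_H(s,s)-2R_H(t,s)=|t-s|^{2H}.
\]
Since $B(t)-B(s)$ is centered Gaussian, all its $L^q$-norms are proportional to its standard deviation, giving
\[
\mathbf{E}\big[|B(t)-B(s)|^q\big]=c_q\,|t-s|^{qH},\qquad q\ge 1,
\]
with $c_q=\mathbf{E}|Z|^q$ for $Z$ a standard normal.

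Next, I would fix an exponent $q\ge 4/\rho$ and apply the GRR inequality with $\Psi(x)=x^q$ and $p(r)=r^{H-\rho/2}$. Writing
\[
D_q:=\int_0^T\int_0^T\frac{|B(u)-B(v)|^q}{|u-v|^{q(H-\rho/2)}}\,du\,dv,
\]
the inequality produces, for all $s,t\in[0,T]$ and along every path,
\[
|B(t)-B(s)|\le C_{q,\rho}\,|t-s|^{\,H-\rho/2-2/q}\,D_q^{1/q}.
\]
Because $q\ge 4/\rho$ forces $2/q\le \rho/2$, the exponent satisfies $H-\rho/2-2/q\ge H-\rho$, so on $[0,T]$ one has $|t-s|^{H-\rho/2-2/q}\le T^{\,\rho/2-2/q}\,|t-s|^{H-\rho}$; absorbing this into the constant yields the desired bound with exponent $H-\rho$. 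I would therefore \emph{define}
\[
F_B:=\sup_{\substack{s,t\in[0,T]\\ s\ne t}}\frac{|B(t)-B(s)|}{|t-s|^{H-\rho}},
\]
which is finite a.s.\ by the above and satisfies the claimed inequality by construction.

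It remains to verify $\mathbf{E}[F_B^p]<\infty$ for every $p\ge 1$, and this is the one point requiring care. For the fixed $q$ above, the GRR bound gives $F_B\le C_{q,\rho}'\,D_q^{1/q}$, so taking $q$-th moments and using Fubini together with the Gaussian moment identity,
\[
\mathbf{E}\big[F_B^q\big]\le (C_{q,\rho}')^q\,\mathbf{E}[D_q]
=(C_{q,\rho}')^q\,c_q\int_0^T\int_0^T|u-v|^{\,q\rho/2}\,du\,dv<\infty,
\]
the integral being finite since its integrand is bounded on $[0,T]^2$. The subtlety is that a single application only controls the $q$-th moment; to reach \emph{all} moments I would let $q$ range over $[4/\rho,\infty)$. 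Each such $q$ gives the same random variable $F_B$ (it does not depend on $q$) together with $\mathbf{E}[F_B^q]<\infty$, and since $\Omega$ is a probability space one has $\|F_B\|_{L^p}\le\|F_B\|_{L^q}$ whenever $p\le q$; letting $q$ grow yields $\mathbf{E}[F_B^p]<\infty$ for every $p\ge1$. The main obstacle is thus not any single estimate but the bookkeeping in the GRR step---selecting $p(r)$ and the threshold $q\ge 4/\rho$ so that the Hölder exponent dominates $H-\rho$ while the moment integral stays convergent; everything else is a direct computation.
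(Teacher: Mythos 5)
Your proposal is correct: the paper itself states Lemma \ref{tckolmogorov} without proof, recalling it as a standard fact from its references (see \cite{KS,nualart2,Za}), and the Garsia--Rodemich--Rumsey argument you give---with $\Psi(x)=x^q$, $p(r)=r^{H-\rho/2}$, the threshold $q\ge 4/\rho$, and the $L^p\le L^q$ monotonicity on a probability space to reach all moments---is exactly the canonical derivation used in those references (e.g.\ Nualart--R\u{a}\c{s}canu). The only cosmetic point is that your $F_B$, defined as the H\"older seminorm, yields $\le$ rather than the stated strict inequality $<$; replacing $F_B$ by $F_B+1$ (which preserves all moment bounds) settles this.
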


The pathwise integral w.r.t. fractional Brownian motion $B$  can be defined by (see \cite{Za})

\begin{equation}
    \label{eqdefintegral}
\int_a^bf_sdB(s)= \int_a^b(D_{a+}^{\alpha}f)(s)(D_{b-}^{1-\alpha}B_{b-})(s)ds,
\end{equation}
if last integral is well-defined,  where $\alpha \in (0 , 1)$ and  for  $s \in [0 , T]$,
 \begin{equation}
(D_{a+}^{\alpha}f)(s)=\frac{1}{\Gamma(1-\alpha)}\left[\frac{f(s)}{(s-a)^{\alpha}}+\alpha\int_a^s\frac{f(s)-f(u)}{(s-u)^{1+\alpha}}du\right]I_{(a,b)}(s),
\end{equation}

\begin{equation}
(D_{b-}^{1-\alpha}B_{b-})(s)=\frac{1}{\Gamma(\alpha)}\left[\frac{B_{b-}(s)}{(b-s)^{1-\alpha}}+(1-\alpha)\int_s^b\frac{B_{b-}(s)-B_{b-}(u)}{(u-s)^{2-\alpha}}du\right]I_{(a,b)}(s),
\end{equation}
are the so-called Riemann-Liouville fractional derivatives, where
$$B_{b-}(s)=(B(b)-B(s))I_{(a,b)}(s).$$

If $f\in C^\nu(a,b)$ ($\nu$-H\"older continuous function) with  $\nu+H>1$, the pathwise fractional integral exists for any $\alpha\in (1-H, \nu)$ and it admits the estimate
\begin{equation}
    \label{eqcotaintegral}
\left|\int_a^b f_sdB(s)\right|\leq F_0\left[\int_a^b\frac{|f(s)|}{(s-a)^{\alpha}}ds+\int_a^b\int_a^s\frac{|f(s)-f(u)|}{(s-u)^{\alpha+1}}duds\right],
\end{equation}
where
\begin{equation}
    \label{eqdefc0}
  F_0=F_0(b, a)=F_0(\omega, b, a)=C\cdot \sup_{a<s<b}|D_{b-}^{1-\alpha}B_{b-}(s)|.
    \end{equation}
If $a=0$, we denote $F_0(b)=F_0(b, 0)=F_0(\omega, b, 0)$.

We also recall the following result, see \cite[Lemma 7.5]{nualart2}.		

\begin{lemma} For all $T>0$, $H>1/2$ and $p\in[1, \infty)$ we have
    \label{lecotac0}
    $$\mathbf{E}(F_0^p(T))=\mathbf{E}\left(\sup_{0\leq s\leq t\leq T}D_{t-}^{1-\alpha}B_{t-}(s)\right)^p<\infty.$$
\end{lemma}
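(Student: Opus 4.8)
The plan is to reduce everything to the H\"older estimate of Lemma \ref{tckolmogorov}. Recall that $F_0(T)$ is a deterministic constant times $\sup_{0\le s\le t\le T}|D_{t-}^{1-\alpha}B_{t-}(s)|$, so it suffices to dominate this supremum pointwise (in $\omega$) by a deterministic constant times the random variable $F_B$ of Lemma \ref{tckolmogorov}, and then take $p$-th moments. First I would fix $0<\rho<H$ small enough that $H+\alpha-\rho-1>0$; this is possible precisely because the standing assumption $\alpha>1-H$ forces $H+\alpha-1>0$, and this single inequality renders nonnegative every exponent appearing below.

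Next I would estimate the two terms in the explicit formula for $D_{t-}^{1-\alpha}B_{t-}(s)$ separately. For the boundary term, using $B_{t-}(s)=B(t)-B(s)$ together with Lemma \ref{tckolmogorov},
\begin{equation*}
\frac{|B_{t-}(s)|}{(t-s)^{1-\alpha}}\le F_B\,(t-s)^{H-\rho-(1-\alpha)}=F_B\,(t-s)^{H+\alpha-\rho-1}\le F_B\,T^{H+\alpha-\rho-1}.
\end{equation*}
For the integral term, the key algebraic observation is that for $s,u\in(0,t)$ one has $B_{t-}(s)-B_{t-}(u)=B(u)-B(s)$, so the singular kernel is tamed by H\"older continuity:
\begin{equation*}
\int_s^t\frac{|B_{t-}(s)-B_{t-}(u)|}{(u-s)^{2-\alpha}}\,du\le F_B\int_s^t (u-s)^{H-\rho+\alpha-2}\,du=\frac{F_B}{H+\alpha-\rho-1}\,(t-s)^{H+\alpha-\rho-1}.
\end{equation*}
The choice of $\rho$ guarantees $H-\rho+\alpha-2>-1$, so this integral converges at the lower endpoint, and the resulting power of $(t-s)$ is again bounded by $T^{H+\alpha-\rho-1}$.

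Combining the two bounds and taking the supremum over $0\le s\le t\le T$ yields $\sup_{0\le s\le t\le T}|D_{t-}^{1-\alpha}B_{t-}(s)|\le C\,F_B$ with $C=C(T,H,\alpha,\rho)$ deterministic; hence $F_0(T)\le C'F_B$. Raising to the power $p$ and invoking $\mathbf{E}(F_B^p)<\infty$ from Lemma \ref{tckolmogorov} gives $\mathbf{E}(F_0^p(T))\le (C')^p\,\mathbf{E}(F_B^p)<\infty$. I expect the only genuinely delicate point to be the integrability of the singular kernel $(u-s)^{-(2-\alpha)}$ near $u=s$; everything hinges on choosing $\rho$ so that the H\"older exponent $H-\rho$ strictly exceeds the order $1-\alpha$ of the fractional derivative, which is exactly the condition $\alpha>1-H$. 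The uniformity of $C$ in $s$ and $t$ is then automatic, since each term is controlled by a fixed power of $T$.
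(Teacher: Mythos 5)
Your proof is correct. One point worth noting: the paper does not actually prove Lemma \ref{lecotac0} at all --- it is quoted verbatim from \cite{nualart2} (Lemma 7.5). Your argument is essentially the standard one underlying that citation, and it makes the statement self-contained given Lemma \ref{tckolmogorov}: split the fractional derivative into its boundary and integral parts, use the identity $B_{t-}(s)-B_{t-}(u)=B(u)-B(s)$ to apply the H\"older bound $|B(u)-B(s)|\le F_B|u-s|^{H-\rho}$ inside the singular integral, and observe that the condition $\alpha>1-H$ lets you pick $0<\rho<H+\alpha-1$ so that every exponent $H+\alpha-\rho-1$ is strictly positive, which simultaneously kills the singularity at $u=s$ and gives a bound uniform in $(s,t)\in[0,T]^2$. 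After that, the entire probabilistic content is carried by the single fact $\mathbf{E}(F_B^p)<\infty$ from Lemma \ref{tckolmogorov}, so the passage to $p$-th moments is immediate. The only difference from the cited proof is cosmetic: Nualart and R\u{a}\c{s}canu obtain the random H\"older constant via the Garsia--Rodemich--Rumsey inequality rather than invoking it as a black box, whereas you take it from Lemma \ref{tckolmogorov}; given that the paper states that lemma beforehand, your route is the natural one and is fully rigorous.
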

 

\subsection{The pathwise stochastic differential equation driven by fBm}
\label{sec:SDE}
We consider a fractional stochastic differential equation of the form (\ref{1}) where $B^{H}:=B$ is a fractional Brownian motion with Hurst index $H>1/2$, $x(0)>0$ and $\sigma, b: \Omega\times \mathbb{R}\to \mathbb{R} $ are measurable functions. That is, we consider the integral equation
\begin{equation*}
X(t)= x({0}) + \int_{0} ^{t} b(X(s)) ds + \int_{0} ^{t} \sigma (X(s)) dB({s}), \hskip0.5cm t\ge 0,
\end{equation*}
where the integral with respect to $B$ is defined as in \eqref{eqdefintegral}, Section \ref{sec:fractional-calculus}.
To guarantee the existence of a solution to (\ref{1}), 
 we suppose that the coefficients $\sigma$ and $b$ satisfy the following assumptions: 

\begin{equation}\label{AB}
\hspace{-9cm}\mbox{\textbf{Assumptions A, B, and C:}}
\end{equation}
\begin{description}
    \item[A] $\sigma$ is differentiable and there exist constants $C_\sigma, C_N, K_N, K >0$ and $ \kappa \in (\frac{1}{H} -1, 1]$, such that $\sigma$ satisfies:
    \begin{enumerate}
    \item  Lipschitz continuity: 
		$$|\sigma(x)-\sigma(y)|\leq C_\sigma|x-y|, \ \ x,y \in \mathbb{R}.$$
    \item Local H\"older continuity for the derivative of $\sigma$:
    $$|\sigma'(x)-\sigma'(y)|\leq C_N |x-y|^{\kappa}, \ \ |x|, |y|\leq N.$$
\end{enumerate}

\item[B]  Local Lipschitz continuity: for any $N>0$ there exists $K_N >0$ such that
    $$|b(x)-b(y)|\leq K_N |x-y|, \ \  |x|, |y|\leq N.$$  

\item[C]  Linear growth: for every $x\in \mathbb{R}$
    $$|b(x)|\leq K(|x|+1), \ \ x \in \mathbb{R}.$$
\end{description}

Let us recall \cite[Theorem 2.1]{nualart2} concerning the solution's existence and to (\ref{1}).
\begin{theorem}
\label{texistenciax}
Under conditions \textbf{A}, \textbf{B}, and \textbf{C},  the equation (\ref{1}) has a unique solution $X$ in $C^{H-\rho}(0,T)$ a.s. for any $0<\rho<H$. Moreover, there exists a positive (random) constant $F_X=F_X(\omega)$ satisfying $\mathbf{E}(F_X^p)<\infty$ for all $p\geq 1$ such that \begin{equation}
\label{ecotax}
\sup_{0\leq t \leq T}|X(t)|< F_X, \ \ \ \sup_{0\leq s\leq t \leq T}|X(t)-X(s)|< F_X|t-s|^{H-\rho}.
\end{equation}
\end{theorem}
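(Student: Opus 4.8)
The plan is to realize \eqref{1} as a fixed point of the solution map in a space of Hölder-type functions adapted to the Young integral. Fix $\rho\in(0,H)$ small and set $\gamma=H-\rho$; since $H>1/2$ we have $2\gamma>1$ for $\rho$ small, so that whenever $X$ is $\gamma$-Hölder the composition $\sigma(X(\cdot))$ is again $\gamma$-Hölder by \textbf{A}(1), and $\gamma+H>1$ makes the pathwise integral $\int_0^t\sigma(X(s))\,dB(s)$ well defined via \eqref{eqdefintegral} for any fixed $\alpha\in(1-H,\gamma)$. I would carry out the argument in the space $W^{\alpha,\infty}_0(0,T)$ whose norm $\|f\|_{\alpha,\infty}=\sup_{t\le T}\bigl(|f(t)|+\int_0^t|f(t)-f(s)|(t-s)^{-\alpha-1}\,ds\bigr)$ is exactly the quantity controlled on the right-hand side of \eqref{eqcotaintegral}; assumption \textbf{A}(2) with $\kappa>\frac1H-1$ enters to keep $\sigma'(X(\cdot))$, and hence the increments of $\sigma(X(\cdot))$, inside this space.

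First I would construct the solution by Picard iteration, setting $X^0\equiv x(0)$ and
\[
X^{n+1}(t)=x(0)+\int_0^t b(X^n(s))\,ds+\int_0^t\sigma(X^n(s))\,dB(s).
\]
The crucial point is a uniform-in-$n$ bound: inserting the linear growth \textbf{C} of $b$ and the linear growth of $\sigma$ (which follows from \textbf{A}(1)) into \eqref{eqcotaintegral}, and abbreviating the random constant of \eqref{eqdefc0} by $F_0(T)$, one obtains a closed integral inequality for $\sup_{s\le t}|X^{n+1}(s)|$ and its fractional seminorm in terms of the same quantities for $X^n$; a fractional Gronwall lemma then yields a bound $\|X^n\|_{\alpha,\infty}\le F_X$ independent of $n$, where $F_X$ is an explicit function of $F_0(T)$, $T$, $|x(0)|$ and the structural constants. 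This $F_X$ is the random constant claimed in \eqref{ecotax}.

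With all iterates confined to the ball of radius $F_X$, the local Lipschitz constant of the nonlinear map $X\mapsto\sigma(X(\cdot))$ is controlled, and I would show $(X^n)$ is Cauchy in the weighted norm $\|\cdot\|_{\alpha,\lambda}$ obtained by inserting the factor $e^{-\lambda t}$: estimating $\|X^{n+1}-X^n\|_{\alpha,\lambda}$ via \textbf{B}, \textbf{A} and \eqref{eqcotaintegral} produces a factor $\theta<1$ for $\lambda$ large, so the increments contract geometrically and $(X^n)$ converges to a limit $X$ solving \eqref{1}; the same estimate applied to two solutions gives uniqueness. The regularity $X\in C^{\gamma}(0,T)$ and the Hölder bound in \eqref{ecotax} follow by applying \eqref{eqcotaintegral} on each subinterval $[s,t]$, and since Lemma \ref{lecotac0} gives $\mathbf{E}(F_0^p(T))<\infty$ for every $p\ge1$ while $F_X$ depends on $F_0(T)$ only through powers and exponentials, the moment bound $\mathbf{E}(F_X^p)<\infty$ is inherited.

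The step I expect to be the main obstacle is the uniform a priori bound, because \eqref{eqcotaintegral} controls the stochastic term only through the fractional seminorm of $X^n$ (via $|\sigma(X^n(s))-\sigma(X^n(u))|\le C_\sigma|X^n(s)-X^n(u)|$), so the inequality for the norm of $X^{n+1}$ feeds back on the norm of $X^n$ and must be closed without losing control of the constants. Decoupling the supremum from the seminorm—either by the exponential-weight device or by a bootstrap over short subintervals whose length is calibrated to $F_0(T)$—is the delicate part, and it is exactly here that the nonlinearity of $\sigma$, together with the regularity hypothesis \textbf{A}(2), matters and the argument departs from the constant-diffusion setting.
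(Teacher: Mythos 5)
You should first note that the paper contains no proof of Theorem \ref{texistenciax}: it is recalled verbatim from \cite[Theorem 2.1]{nualart2}, so the only meaningful comparison is against the Nualart--R\u{a}\c{s}canu argument. Your proposal is, in outline, a faithful reconstruction of exactly that argument: the space $W^{\alpha,\infty}_0(0,T)$, the estimate \eqref{eqcotaintegral} with the random constant \eqref{eqdefc0}, the fixed point in the exponentially weighted norm $\|\cdot\|_{\alpha,\lambda}$, the role of \textbf{A}(2) in controlling $X\mapsto\sigma(X(\cdot))$ for the contraction/uniqueness step, and the fractional Gronwall lemma for the a priori bound. So you have chosen the right route, and it is the route of the cited source.

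There is, however, one genuine gap in your last step, the moment bound. You argue: Lemma \ref{lecotac0} gives $\mathbf{E}(F_0^p(T))<\infty$ for all $p\ge 1$, and $F_X$ depends on $F_0(T)$ ``only through powers and exponentials,'' hence $\mathbf{E}(F_X^p)<\infty$. This inference is not valid: finiteness of all polynomial moments of $F_0$ never implies finiteness of $\mathbf{E}\exp(c F_0)$, and the fractional Gronwall lemma produces a bound of the form $F_X\le C\exp\bigl(C\,F_0(T)^{1/(1-\alpha)}\bigr)$, i.e.\ genuinely exponential in a power of $F_0$. What saves the argument in \cite{nualart2} is Fernique's theorem: $F_0(T)$ is the supremum of (the absolute value of) a Gaussian family, so $\mathbf{E}\exp(\lambda F_0^2)<\infty$ for small $\lambda$, and consequently $\mathbf{E}\exp\bigl(c F_0^{\beta}\bigr)<\infty$ for \emph{every} $c>0$ provided $\beta<2$. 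You therefore must choose the parameter $\alpha$ not merely in $(1-H,\gamma)$ but in $(1-H,1/2)$ --- which is possible precisely because $H>1/2$ --- so that $1/(1-\alpha)<2$ and the exponential bound is sub-Gaussian; this is exactly why the restriction $\alpha<1/2$ appears in \cite[Theorem 2.1]{nualart2}. With that correction (and Fernique in place of Lemma \ref{lecotac0} at this point), your argument closes and yields \eqref{ecotax} with $\mathbf{E}(F_X^p)<\infty$ for all $p\ge 1$.
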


\

If the function $b$ in the fractional SDE (\ref{1})  does not satisfy Assumption \textbf{C} on linear growth, the solution of this equation may explode in finite time. That is, there exists a  random time $T^X_e<\infty$ such that $X(t)$ is defined in
$[0, T^X_e)$ and $\displaystyle\lim_{t\uparrow T^X_e} |X(t)|= \infty$ (see McKean \cite{MK}, Section 3.3). 

To study the explosion of the solution to \eqref{1}, let $M> x(0)$ be fixed and let the functions $b_M$ and $\sigma_M$ be such that

\begin{enumerate}
	\item $b_M$ and $\sigma_M$ are bounded and satisfy the conditions \textbf{A} and \textbf{B}.
\item $b_M(x) = b(x)$ and  $\sigma_M(x) = \sigma(x)$ for any $x\in \mathbb{R}$ with $|x|\leq M$.
\end{enumerate}
By Theorem \ref{texistenciax}, there exists a unique solution $X_M$, called the local solution, to 
\begin{equation}\label{XM}
X_M(t)= x({0}) + \int_{0} ^{t} b_M(X_M(s)) ds + \int_{0} ^{t} \sigma_M (X_M(s)) dB(s), \quad t\in [0,T].
\end{equation}
For $M>x(0)$, we define  the  random time
\begin{equation}
\label{edefrm}
T^X_M:=\inf\{t\geq 0: |X_M(t)|\geq M\},
\end{equation}
where, as usual, $\inf\{\emptyset\} = \infty$. Clearly, the uniqueness of the  solution to \eqref{XM} implies that, for $M_1 < M_2$ we have $X_{M_1}(t) = X_{M_2}(t)$ on $[0; T^X_{M_1})$, and $T^X_{M_1} \leq T^X_{M_2}$ (actually, due to continuity of the solutions, the equality is possible only when both are infinite).
Let\begin{equation}
\label{edefrm-T}
T_e^X = \lim_{M\to \infty} T^X_M \ \ \text{a.s.}
\end{equation}
Note that $T_e^X\in (0 ,\infty]$ is well defined because the sequence $\{T^X_M:M>0\}$ is non-decreasing. Moreover, there exists a process $X=\{X(t): t \in [0 , T_e^X)\}$  such that
$X(t) = X_M(t)$ on $[0 , T^X_M)$ for each $M> x(0)$.
We call this process $X$ the solution to \eqref{1} in the case where $b$ and $\sigma$ satisfy Assumptions \textbf{A} and \textbf{B} but not \textbf{C}. This gives us the following definition.
\begin{definition}
    The random time $T_e^X$ given by \eqref{edefrm-T} is called the explosion time of \eqref{1}. If $T_e^X < \infty$, with positive probability we say that the solution to the SDE \eqref{1} explodes in finite time. 
\end{definition}

\subsection{Osgood criteria for the explosion}
\label{sec:explosion-criteria}
In what follows, we will give hypotheses on the
coefficients $b$ and $\sigma$, which ensure on the one hand, the use of Lamperti's transformation and, on the other hand, that they satisfy the hypotheses necessary to have a finite-time explosion a.s. as presented in references \cite{leon , leon2}.

First, let us recall the hypotheses and results obtained in \cite{leon2}. The authors consider the following equation
\begin{equation}\label{LPV}
X(t) = x(0) +  \int_0^t a(s)g(X(s))ds + h(t), \quad  t \geq 0, 
\end{equation}
under the following conditions:
\begin{description}
\item[H1] $a : (0, \infty) \to  (0, \infty)$ is a continuous function such that
$\displaystyle \lim_{t \to \infty }  \int_t^{t+\eta} a(s)ds > 0$, for some $\eta > 0$. 
\item[H2]  $g : \mathbb{R} \to  [0, \infty)$ is a continuous function and there exist $-\infty \leq l < \infty$ and $-\infty < r < \infty$ such that $g > 0$ and locally Lipschitz on $(l, \infty)$, and $g : [r, \infty) \to  (0, \infty)$ is non-decreasing. 
\item[H3] $h : [0, \infty) \to  \mathbb{R}$ is a continuous function such that $\displaystyle \limsup_{t \to \infty}\displaystyle\inf_ { 0 \leq u \leq {\tilde{\eta}}} h(t + u)= \infty $, for some $ {\tilde{\eta}} >0$, where $\limsup$ denotes the upper limit.
\end{description}
\begin{remark}
   In what follows we  suppose that $l= -\infty$. Otherwise, as it is point out in \cite{leon2} (Remark (3.4)), we only need to assume that the initial condition of \eqref{1} is large enough. 
\end{remark}
The next theorem, \cite{leon2}, provides criteria for establishing whether the solution to \eqref{LPV} explodes in finite time. 
\begin{theorem}\label{teoperalta}
    Let $x(0) \in \mathbb{R}$ and assume {\bf H1-H3}. Then the explosion time $T^X_{e}$ of the
solution of \eqref{LPV} is finite if and only if
$$\int^\infty_r \frac{ds}{g(s)} < \infty.$$ 
\end{theorem}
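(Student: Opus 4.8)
The plan is to absorb the additive term by setting $Y(t)=X(t)-h(t)$, so that \eqref{LPV} becomes $Y(t)=x(0)+\int_0^t a(s)\,g(Y(s)+h(s))\,ds$. Since $a>0$ and $g\ge 0$ with all data continuous, $Y$ is nondecreasing and of class $C^1$ on $(0,T^X_e)$ with $Y'(t)=a(t)\,g\big(Y(t)+h(t)\big)\ge 0$; in particular $Y(t)\ge x(0)$. As $h$ is continuous and therefore locally bounded, $X=Y+h$ blows up at a finite time iff $Y$ does, so the whole statement reduces to deciding the finiteness of the explosion time of $Y$. The engine of both implications is the Osgood separation of variables: whenever a function $z$ satisfies $z'=\alpha\,g(z)$ (resp.\ $\ge$, $\le$) on a range where $g$ is positive and nondecreasing, one gets $\int_{z(s_0)}^{z(s)}\frac{dw}{g(w)}=\int_{s_0}^{s}\alpha(u)\,du$ (resp.\ $\ge$, $\le$). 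I write $G(x)=\int_x^{\infty}\frac{dw}{g(w)}$; under the hypothesis $\int_r^{\infty}\frac{ds}{g(s)}<\infty$ this is finite, strictly decreasing and $G(x)\to 0$ as $x\to\infty$, and, since $g$ is nondecreasing on $[r,\infty)$, convergence of the integral forces $g(x)\to\infty$.

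For the ``only if'' direction in its contrapositive form (divergence $\Rightarrow$ no finite-time explosion) I would fix $T>0$ and use that $h,a$ are bounded on $[0,T]$, say $|h|\le H_T$, $a\le\bar a_T$. Once $Y$ is large enough that $Y-H_T\ge r$, monotonicity of $g$ gives $g(Y+h)\le g(Y+H_T)$, hence $Y'\le\bar a_T\,g(Y+H_T)$; the separation inequality then yields $\int^{\,Y(t)+H_T}\frac{dw}{g(w)}\le \bar a_T\,T$, and since $\int^{\infty}\frac{dw}{g(w)}=\infty$ this bounds $Y$ on $[0,T]$ (in the region $Y-H_T<r$ the function $g$ is pinched between two positive constants on a compact set, so no blow-up can start there). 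As $T$ is arbitrary, $T^X_e=\infty$.

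For the ``if'' direction (convergence $\Rightarrow$ explosion) the idea is to complete a single autonomous-type blow-up inside one of the large windows furnished by \textbf{H3}. By \textbf{H1} there are $c_0>0$ and $t_\ast$ so that $\int_t^{t+\tilde\eta}a(s)\,ds\ge c_0$ for all $t\ge t_\ast$ (the positive limit in \textbf{H1} only grows when the window is lengthened, which lets me run the two hypotheses on a common window). I would then pick $A$ so large that $G(x(0)+A)<c_0$ and, via \textbf{H3}, a window $[t_1,t_1+\tilde\eta]$ with $t_1\ge t_\ast$ and $\inf_{0\le u\le\tilde\eta}h(t_1+u)\ge A$. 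On this window $Y(s)+h(s)\ge Y(s)+A\ge x(0)+A\ge r$, so $Y'(s)\ge a(s)\,g\big(Y(s)+A\big)$. Comparing with $z'=a\,g(z)$, $z(t_1)=Y(t_1)+A$, the separation identity shows $z$ (hence $Y$) reaches $+\infty$ as soon as $\int_{t_1}^{s}a(u)\,du$ attains $G(z(t_1))\le G(x(0)+A)<c_0\le\int_{t_1}^{t_1+\tilde\eta}a(u)\,du$, i.e.\ strictly before $t_1+\tilde\eta$. Thus $Y$, and with it $X$, explodes in finite time.

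The step I expect to be the crux is this last coordination between \textbf{H1} and \textbf{H3}: one must secure a uniform positive lower bound $c_0$ for the mass $\int_{t_1}^{t_1+\tilde\eta}a$ at the (necessarily large) times where $h$ is big, and simultaneously make the required mass $G(x(0)+A)$ small by taking $A$ large, all while $h$ is controlled by \textbf{H3} only on the window and may oscillate or be very negative outside it. If the window length $\tilde\eta$ of \textbf{H3} is smaller than the window of \textbf{H1}, one cannot directly transfer the mass bound and must instead first drive $Y$ to a large level by accumulating increments over several windows, so that $G(Y(t_1)+A)$ is smaller than the (possibly small) available mass in the chosen window; the monotonicity of $Y$ is what makes this accumulation usable. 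The remaining comparison and separation-of-variables computations are routine once \textbf{H2} guarantees that $g$ is positive and nondecreasing on $[r,\infty)$.
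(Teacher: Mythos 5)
The paper itself offers no proof of Theorem \ref{teoperalta}; it is imported verbatim from \cite{leon2}, so your reconstruction has to stand on its own. Its skeleton is the right one (and the standard one): reduce to $Y=X-h$, which is nondecreasing with $Y'(t)=a(t)\,g(Y(t)+h(t))$ and $Y\ge x(0)$, then run Osgood separation of variables in both directions. Your ``only if'' direction (divergence implies no finite-time explosion) is complete. Your ``if'' direction is also complete in the case $\tilde\eta\ge\eta$, where \textbf{H1} does transfer to the \textbf{H3}-windows, since $\liminf_{t\to\infty}\int_t^{t+\tilde\eta}a(s)\,ds\ \ge\ \lim_{t\to\infty}\int_t^{t+\eta}a(s)\,ds>0$, and your choice of $A$ with $G(x(0)+A)<c_0$ forces blow-up inside a single window.

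The case you yourself flag as the crux, $\tilde\eta<\eta$, is a genuine gap, and the repair you sketch (accumulate increments over many windows until $G(Y(t_1)+A)$ beats the available mass) cannot be made to work, because with uncoordinated windows the statement as transcribed here is actually false. Take $g(x)=e^x$, for which the reduced equation integrates exactly: $e^{-Y(t)}=e^{-x(0)}-\int_0^t a(s)e^{h(s)}\,ds$, so finite-time explosion occurs if and only if $\int_0^\infty a(s)e^{h(s)}\,ds\ \ge\ e^{-x(0)}$. Now let $\eta=2$, $\tilde\eta=1$, and build $a,h$ in alternating blocks: on $[2k,2k+1]$ set $h\equiv A_k:=k$ and $a\equiv\delta_k:=e^{-k}4^{-k}$; on $[2k+1,2k+2]$ set $a\approx 1$ and $h\equiv -M_k:=-e^{2k}$, with steep continuous interpolations whose contribution to $\int ae^h$ is $O(e^{k}/M_k)=O(e^{-k})$. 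Then \textbf{H1} holds with $\eta=2$ (every length-$2$ window has $a$-mass tending to $1$), \textbf{H3} holds with $\tilde\eta=1$, \textbf{H2} and the Osgood condition hold, yet $\int_0^\infty ae^h\le\sum_k\bigl(4^{-k}+Ce^{-k}\bigr)$ can be made smaller than $e^{-x(0)}$, so $Y$ stays bounded forever and $T^X_e=\infty$. This also pinpoints exactly why your accumulation idea fails: outside the \textbf{H3}-windows $h$ is so negative that $g(Y+h)$ produces negligible growth, while inside them the mass $m_k$ decays faster than $1/g(x(0)+A_k)$, so the monotone $Y$ never reaches a high level. The theorem is true only under a coordination of the windows, $\eta\le\tilde\eta$ (equivalently, \textbf{H3} holding for the window length of \textbf{H1}); this is what \cite{leon2} requires implicitly and what is automatic in this paper's application, where $a\equiv 1$ makes \textbf{H1} hold for every $\eta$ and fBm satisfies \textbf{H3} for every $\tilde\eta$. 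Under that reading your proof is correct; without it, no argument can close the gap.
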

\begin{remark}
Here, $r$ is given in \textbf{H2}. Also, by plugging in $h(t) = \sigma B(t)$ with constant $\sigma$ one obtains an explosion criteria for the SDE \eqref{1} in the case of constant diffusion $\sigma$. 
\end{remark}
Let us return to our equation \eqref{1} with a more general function $\sigma$. We pose the following additional hypothesis.
\begin{description}
	\item[D] For the coefficient functions $b$ and $\sigma$, we assume:
 \begin{enumerate}
 \item For any $M>0$, there exist bounded functions $\sigma_M$ and $b_M$ satisfying \textbf{A} and \textbf{B} such that $b_M(x)=b(x)$ and $\sigma_M(x)=\sigma(x)$ for any $|x|\leq M$.
\item  $\sigma: \mathbb{R} \to  (0, \infty)$ is a strictly positive and continuous function. 
	\item $\dfrac{b}{\sigma}: \mathbb{R} \to  [0, \infty)$ satisfies condition 
{\bf{H2}}.
\end{enumerate}
\end{description}
Note that the first statement guarantees that the local solutions $X_M$ exists. Under \textbf{D} and with the initial condition $x(0)>0$, we define
\begin{equation}
\label{eqTheta}
    \Theta(x)= \int_{x(0)}^x \frac{ds}{\sigma(s)}, \quad x\in\mathbb{R}.
\end{equation}
Note that $\Theta: \mathbb{R} \to \mathbb{R}$ is well-defined because $\sigma$ is strictly positive and continuous. 
Since $\sigma$ is positive, it follows that $\Theta$ is also strictly increasing, and hence $\Theta^{-1}$ exists. Moreover, $\Theta^{-1}$ is strictly increasing as well, and locally Lipschitz. Now following \cite{Neuenkirch}, the process 
\begin{equation}\label{theta(Y)}
    Y(t)= \Theta(X(t)), \quad t\ge 0,
\end{equation}
is the unique solution to the stochastic differential equation 
\begin{eqnarray}\label{eq-gen}
\begin{cases}
\label{eqY}
dY(t)&= g(Y(t))dt  + dB(t), \quad t>0 \\
Y(0) &= \Theta(x(0))
\end{cases}
\end{eqnarray}
with 
\begin{equation}
    \label{eqg}
    g(x)= \frac{b(\Theta^{-1}(x))}{\sigma(\Theta^{-1}(x))}, \quad x\in \mathbb{R}.
\end{equation}
Now, equation \eqref{eqY} has the form of equation \eqref{LPV} and the fractional Brownian motion satisfies hypothesis {\bf H3}, as pointed out  in \cite{leon}. Moreover, Assumption {\bf D} implies that $g$ satisfies {\bf H2}, and {\bf H1} is trivially true for $a \equiv 1$. This leads to the following criteria.
\begin{theorem}
\label{teo_explosion}
    Let Hypothesis {\bf D} hold and  assume $\Theta(\infty)=\infty;\ \ \Theta(-\infty)=-\infty$. Then the solution $(X(t))_{t\ge 0}$ to \eqref{1} explodes in finite time, with probability one, if and only if  
 \begin{equation}
 \label{eqint_b}
     \int_{a}^{\infty} \frac{ds}{b(s)}< \infty,
 \end{equation}
 where $a = \Theta^{-1}(r)$ and $r$ is given in Statement (3) of Hypothesis \textbf{D}.
\end{theorem}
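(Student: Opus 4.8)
The plan is to reduce the explosion problem for the general equation \eqref{1} to the already-understood constant-diffusion case via the Lamperti transform, and then to translate the resulting Osgood-type integral condition back to $b$ by a change of variables. The whole argument is a concatenation of facts already assembled in the excerpt, so the work is in checking the hypotheses and in one substitution.

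First I would invoke the transformation recorded in \eqref{theta(Y)}--\eqref{eqg}: setting $Y(t)=\Theta(X(t))$ with $\Theta$ as in \eqref{eqTheta}, the process $Y$ is the unique solution of the additive-noise equation \eqref{eqY}, $dY=g(Y)\,dt+dB$, with $g=(b\circ\Theta^{-1})/(\sigma\circ\Theta^{-1})$. This equation is exactly of the form \eqref{LPV} with $a\equiv 1$ and $h=B$. I would then verify the three hypotheses of Theorem \ref{teoperalta}: \textbf{H1} holds trivially since $\int_t^{t+\eta}1\,ds=\eta>0$; \textbf{H2} holds for $g$ by Hypothesis \textbf{D}(3); and \textbf{H3} holds for the fBm sample paths, as established in \cite{leon}. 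Theorem \ref{teoperalta} then yields that the explosion time $T^Y_e$ of $Y$ is finite a.s. if and only if $\int_r^\infty ds/g(s)<\infty$.

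Second I would establish that $X$ and $Y$ explode simultaneously, so that $T^X_e=T^Y_e$. Because $\Theta$ is a continuous, strictly increasing bijection of $\mathbb{R}$ with $\Theta(\pm\infty)=\pm\infty$ (this is precisely where the boundary hypotheses $\Theta(\infty)=\infty$, $\Theta(-\infty)=-\infty$ enter), we have $|X(t)|\to\infty$ if and only if $|Y(t)|=|\Theta(X(t))|\to\infty$; hence the explosion times coincide, and in particular $X$ explodes in finite time with probability one exactly when $Y$ does. Third I would convert the integral criterion: using $\Theta'(s)=1/\sigma(s)$ and the substitution $x=\Theta(s)$ (so $s=\Theta^{-1}(x)$, $dx=ds/\sigma(s)$, with $x=r\leftrightarrow s=a=\Theta^{-1}(r)$ and $x\to\infty\leftrightarrow s\to\infty$),
\begin{equation*}
\int_r^\infty \frac{dx}{g(x)}=\int_r^\infty \frac{\sigma(\Theta^{-1}(x))}{b(\Theta^{-1}(x))}\,dx=\int_a^\infty \frac{\sigma(s)}{b(s)}\cdot\frac{ds}{\sigma(s)}=\int_a^\infty \frac{ds}{b(s)},
\end{equation*}
so the two integrals are in fact equal. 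Combining the three steps gives the asserted equivalence.

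The genuinely delicate point is not any of these computations but the validity of the Lamperti transform itself in the pathwise setting, namely that $Y=\Theta(X)$ really solves \eqref{eqY} and that the local solutions $X_M$ and their transforms match up to the explosion time so the two explosion times truly coincide. For this I would lean on the chain rule for Young integrals (following \cite{Neuenkirch}) together with the localization via the truncated equations and the stopping times $T^X_M$ defined in \eqref{edefrm}, checking that $\Theta(X_M)$ agrees with the solution of the truncated version of \eqref{eqY} on $[0,T^X_M)$; once this compatibility is in place, the change of variables and the invocation of Theorem \ref{teoperalta} are routine.
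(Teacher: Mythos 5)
Your proposal is correct and follows essentially the same route as the paper's own proof: Lamperti transform to reduce to the additive-noise equation, application of Theorem \ref{teoperalta} after checking \textbf{H1}--\textbf{H3}, the simultaneous-explosion argument via $\Theta(\pm\infty)=\pm\infty$, and the change of variables $u=\Theta^{-1}(x)$ giving $\int_r^\infty dx/g(x)=\int_a^\infty ds/b(s)$. Your closing remark on verifying that $\Theta(X_M)$ matches the truncated transformed equation via the Young-integral chain rule (following \cite{Neuenkirch}) is exactly the compatibility the paper disposes of in its opening lines, so nothing is missing.
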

\begin{proof}
Firstly, it follows from Hypothesis \textbf{D} that the local solutions $Y_M$ (as defined in \eqref{XM}) associated with \eqref{eq-gen} exist, and that the explosion time $T_e^Y$ is well-defined. Due to Lamperti's transform $Y(t) = \Theta(X(t))$, it follows that the local solutions $X_M$ defined in \eqref{XM} to equation \eqref{1} are well-defined as well. Moreover, since $\Theta(\infty) = \infty$, $\Theta(-\infty) = -\infty$ and $\Theta$ and $\Theta^{-1}$ are continuous functions, it follows that $Y= \Theta(X)$ explodes in finite time if and only if $X =\Theta^{-1}(Y)$ explodes in finite time. Now, since $g$ is a strictly positive, locally Lipschitz, and non-decreasing function, from Theorem \ref{teoperalta} (or   \cite[Theorem 4.3]{leon}), the solution of equation \eqref{eqY} explodes in finite time with probability 1 if and only if 
\begin{equation*}
 \label{eqint_g}
    \int_{r}^{\infty} \frac{dx}{g(x)} < \infty.
\end{equation*}
The claim now follows since, by the change of variable $u=\Theta^{-1}(x)$, we have 
\begin{equation}
 \label{eqint_g1}
    \int_{r}^{\infty} \frac{dx}{g(x)}= \int_{r}^{\infty} \frac{\sigma(\Theta^{-1}(x))}{b(\Theta^{-1}(x))}dx= \int_{a}^{\infty} \frac{du}{b(u)}
\end{equation}
where $a = \Theta^{-1}(r)$, which is finite if and only if  \eqref{eqint_b} holds. This completes the proof.

\qed

\end{proof}

\begin{remark}
    It is  pointed out in \cite{leon} that if $Y$ explodes in finite time, then $\lim_{t \uparrow T_e^Y} Y_t = \infty$. Therefore, if the solution $X$ explodes in finite time, we have that $\displaystyle\lim_{t \uparrow T_e^X} X_t = \infty$.
\end{remark}

\subsection{Examples}
\label{sec:examples}
In this section, we present the following two interesting examples with non constant $\sigma$, in which cases the 
result of \cite{leon} (that assumes constant $\sigma$) cannot be applied. 

\vspace{1cm}
{\bf Example 1.}
In \cite{narita}, it is proven that the solution to the stochastic differential equation 
\begin{equation}\label{narita}
dX(t) = v(t)X^N(t) dt + a(t)X(t)dB(t); \quad  X(0) = x(0)>0, \quad t \in [0,T],
\end{equation}
where $v:[0, \infty)\to (0, \infty)$ and $a: [0,\infty) \to \mathbb{R}$ are continuous functions and $N\ge 2$, explodes in finite time. In addition, it was shown that the solution remains positive. 
Now, to fix ideas taking $v(t)=a(t)\equiv 1$ and $x(0)=1$ leads to
\begin{equation}
    \label{ejemplo1}
    dX(t) = X^{N}(t) dt +X(t)dB(t); \quad  x(0) = 1,  \quad t \in [0,T].
\end{equation}
Note that, in this case, we only need to assume that $\sigma: (0,\infty) \to (0,\infty)$ is strictly positive and continuous instead of Statement (2) in Hypothesis \textbf{D}.

Now from \eqref{eqTheta} we get, for $x\in (0, \infty)$, 
$$\Theta(x)= \ln(x) \quad \text{and} \quad \Theta^{-1}(x)=e^x. $$
Thus \eqref{eqg} gives 
$g(x)=e^{(N-1)x}$ that is a positive function, locally Lipschitz, and non-decreasing, and we see that Assumption \textbf{D} is satisfied.
Since now $\int_1^\infty \frac{ds}{s^N}< \infty$ for $N \ge 2$, Theorem \ref{teo_explosion} implies that the solution to  \eqref{ejemplo1} explodes in finite time. In fact, the process $Y=(Y(t))_{t\ge 0}$ with $ Y(t)= \Theta(X(t))$,  is the unique solution to the stochastic differential equation 
\begin{eqnarray*}
\begin{cases}
dY(t)&= e^{(N-1)Y(t)}dt  + dB(t), \quad t>0 \\
Y(0) &= 0.
\end{cases}
\end{eqnarray*}
Thus, Theorem \ref{teoperalta} implies that the process  $Y$ explodes in finite time, and hence the solution $X$ also explodes in finite time.
%
%
%

\vspace{1cm}
{\bf Example 2.} In this example, we choose $b$ and $\sigma$ as in \cite{groisman}.  That is, we set $b(x)=(|x|+0.1)^p$ and   $\sigma(x)=(|x|+0.1)^q$ with $0<q<1$ and $p > 1$. 
 Also, we consider the initial condition $x(0) >0$. It follows that $\Theta$, defined in  \eqref{eqTheta}, is given by 
$$\Theta(x)= \begin{cases}
\frac{1}{1-q} \left\{(x+0.1)^{1-q}-(x(0)+0.1)^{1-q}\right\}, & x \ge 0 \\
     \frac{1}{1-q}\left\{ 2 \times 0.1^{1-q }- (0.1-x)^{1-q}-(x(0)+0.1)^{1-q}\right\}, & x< 0,
\end{cases}$$
with an inverse

\begin{footnotesize}
$$\Theta^{-1}(x) = \begin{cases}
    \left\{(1-q)x+(x(0)+0.1)^{1-q} \right\}^{1/(1-q)}-0.1, &  x \ge \frac{1}{1-q} [0.1^{1-q} - (x(0) +0.1)^{1-q}]\\  
     0.1 - \left\{2 \times 0.1^{1-q} - (x(0) + 0.1)^{1-q} - x(1-q) \right\}^{1/(1-q)}, &  x \le \frac{1}{1-q} [0.1^{1-q} - (x(0) +0.1)^{1-q}].
\end{cases}$$
\end{footnotesize}
\vspace{0.5cm}

Now $Y=(Y(t))_{t>0}$ defined as  $ Y(t)= \Theta(X(t))$ is the unique solution to the stochastic differential equation 
\begin{eqnarray}\label{example2}
\begin{cases}
dY(t)&= g(Y(t))dt  + dB(t), \quad t> 0 \\
y(0) &= \Theta(x(0))
\end{cases}
\end{eqnarray}
with 
$$g(x)= \frac{b(\Theta^{-1}(x))}{\sigma(\Theta^{-1}(x))} = (\lvert \Theta^{-1}(x) \rvert  +0.1 ) ^{p-q} .$$
Condition (1) of Hypothesis  {\textbf D} is not satisfied. However, we can apply Theorem  \eqref{teo_explosion} since equation \eqref{example2} has a unique solution  $Y$ that may explode in finite time. In consequence equation \eqref{1} has a unique solution $X$ given by 
$$
X(t)= \Theta^{-1} (Y(t)), \quad t \ge 0. 
$$
Also, \eqref{eqint_b} is satisfied.

\noindent

\section{Approximation of fractional SDEs with explosion}
\label{sec:scheme}
In this section, we introduce an adaptive Euler scheme approximating of the explosion time by following the ideas of 
\cite{acosta} and \cite{groisman}. To do so, we first consider the case that $\sigma$ is a constant diffusion, then use the Lamperti transformation to deal with the  general case. 
\subsection{Adaptive Euler scheme for constant {\boldmath $\sigma$}}
\label{sec:Euler-constant}
In this section, we approximate the explosion time of the solution to SDE
\begin{eqnarray}
\label{eqY2}
\begin{cases}
dY(t)&= g(Y(t))dt  + \sigma dB(t), \quad t> 0 \\
Y(0) &= y(0) >0,
\end{cases}
\end{eqnarray}
where $g$ is a positive, locally Lipschitz and  non-decreasing function, such that $\int_{y(0)}^\infty \frac{1}{g(s)}ds<\infty$ (and hence, by Theorem \ref{teoperalta}, $Y$ explodes in finite time), and $\sigma>0$ is a constant. Recall that $T^Y_e$ denotes the explosion time of the  solution $Y$. In order to introduce our numerical scheme, we pose an additional assumption that $g$ is 
bounded away from zero, i.e. for some constant $l_g$ we have 
\begin{equation}\label{D2}
0< l_g\leq g(y), \ \ \ \forall y\in \mathbb{R}. 
\end{equation}

Let $0<h<1$ be the fixed parameter of the numerical method.  The scheme for approximating the solution $Y$ to equation (\ref{eqY2}) is defined as 
\begin{equation}
\label{eqdefaprox1}
\begin{cases}
Y^h(t_0) &= y(0),\\
Y^h(t_{k+1})  &=Y^h{(t_k)} + g(Y^h(t_k))\tau_k+\sigma[ B(t_{k+1})-B(t_{k})],
\end{cases}
\end{equation}
where we take the random sequence $\{t_k\}$ defined by  $t_0=0$, $t_{k+1}= t_k + \tau_k$, where 
\begin{equation}
    \label{edeftau}
    \tau_k= \frac{h}{g(Y^h(t_k))}.
\end{equation}
We interpolate continuously for $t\in [t_k, t_{k+1}]$ and we write
\begin{eqnarray}
\label{eqdefaprox2}
Y^h(t)&=&Y^h{(t_k)} + g(Y^h(t_k))[t-t_k]+\sigma[ B(t)-B(t_{k})]\notag\\
&=& y(0) + \int_0^{t}g(Y^h(t_u))du+\sigma\int_0^{t}dB(u)
\end{eqnarray}
with $t_u:=t_{n_u}$, $n_u=\max\{k:t_k\leq u\}$. With $\tau_k$ given by (\ref{edeftau}), we define 
\begin{equation}
\label{edefth}
T^h_e:=\sum_{k=0}^{\infty}\tau_k = \sum_{k=0}^{\infty} \frac{h}{g(Y^h(t_k))}.
\end{equation}
Note that if $T^h_e<\infty$ almost surely, then $\tau_k$ converges to zero which further implies that $g(Y^h(t_k))$ goes to infinity, and hence $Y^h$ explodes, because $g$ is increasing. Thus $T^h_e$ corresponds to the explosion time of the numerical scheme $Y^h$. 

We will  prove the  convergence of the numerical approximations in compact (random) intervals where the solution   and the numerical approximation are bounded. 
For this result, note that the time steps $\tau_k$ are  $\mathcal{F}_{t_k}$-measurable and verify $\tau_k \leq Ch$, where $\mathcal{F}_t$ denotes the sigma-algebra generated by the driving fractional Brownian motion $B$.
Moreover, the numerical approximation $Y^{h} $ is a well-defined process
up to time $T^h_e  \wedge K $, for any $K >0$.



Now, let $M>y(0)$ be fixed. We define the following random times
\begin{equation}
\label{edefrm2}
T_{M}^Y:=\inf\{t\geq 0: |Y(t)|\geq M\}
\end{equation}
and 
\begin{equation}
\label{edefrmh}
T_{M}^{Y^h}:=\inf\{t\geq 0: |Y^h(t)|\geq M\}.
\end{equation}
By definition \eqref{edefrm}, the quantities $T_M^Y$ an $T_M^{Y^h}$ approximate explosion times $T^Y_e$ and $T^h_e$ as  $M \to \infty$. The next result gives us the convergence of $Y^h$ to $Y$ and the convergence of their respective explosion times. For the proof, see Appendix \ref{aped.A}. Let us note that the results obtained are analogous to those  presented in \cite{groisman} for the  Brownian motion case. 

\begin{theorem} 
\label{Th aproxY}

Suppose that $g$ is a strictly positive, locally Lipschitz and non-decreasing function such that  $\int_{y(0)}^\infty \frac{1}{g(s)}ds<\infty$ and \eqref{D2} holds. Let $Y$ be the solution of the fractional SDE, (\ref{eqY2}) and $Y^h$ is its approximation, as  defined by (\ref{eqdefaprox1}) and \eqref{eqdefaprox2}.  Then the  following hold:  

\begin{enumerate}
	\item[{\bf (i)}]   (Convergence of the approximation on intervals where $Y$ and $Y^h$ are finite)
Let $S > 0$ be a deterministic fixed time,  $M > y(0)$ be a constant,  and let $T_S^h=T_M^Y\wedge T_{2M}^{Y^h} \wedge S$,  where $T_M^Y $ and $T_M^{Y^h} $ are defined in (\ref{edefrm2}) and  (\ref{edefrmh}) respectively. Then, for any $p\geq 1$, 
\begin{equation*}
\lim_{h\to 0}\mathbf{E}\left(\sup_{0\leq t\leq T_S^h  }|Y(t)-Y^h(t)|^p\right)=0.
\end{equation*}
\item[{\bf (ii)}] 
\begin{enumerate} 
\item For every initial value $z > 0$, $Y^h(\cdot)$ explodes in finite time $T^h_e<\infty$ with probability one.
\item We have,
\begin{equation}\label{limYn}
\lim_{k \to \infty} \frac{Y^h(t_k)}{hk} = 1 \quad a.s.
\end{equation}
\item
Moreover, for any $\alpha > 1$ there exists $k_0 = k_0(\omega)$ such that, for every $k\ge k_0$, we have
$$
\sum_{j=k}^\infty \frac{h}{g(\alpha hj)} \leq  T^h_e - t_k \leq \sum_{j=k}^\infty \frac{h}{g(\alpha^{-1} hj)}.
$$
\end{enumerate}
\item[{\bf (iii)}] (Relation between $T_M^Y$ and $T_{M}^{Y^h}$) It holds that

$$P\left(T_M^Y>T^{Y^h}_{2M}\right) \to 0 \quad \mbox{and} \quad P\left(T_M^{Y^h}>T_{2M}^Y\right) \to 0 \quad \mbox{as} \quad {h\to 0}.$$

\item[{\bf (iv)} ] (Convergence of the approximated explosion time) For any $\varepsilon>0$ we have
$$\lim_{M\to \infty}\lim_{h\to 0}P\left(|T^{Y^h}_M- T_e^Y|>\varepsilon \right)=0.$$
\end{enumerate}
\end{theorem}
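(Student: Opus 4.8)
The plan is to establish (i) first, since it is the engine for (iii) and (iv), and to treat (ii) separately via an exact representation of the scheme. For part (i), the decisive simplification is that the constant diffusion cancels in the difference of the two processes: subtracting \eqref{eqdefaprox2} from the integrated form of \eqref{eqY2} leaves the purely pathwise identity
\[
Y(t)-Y^h(t)=\int_0^t\bigl[g(Y(s))-g(Y^h(t_s))\bigr]\,ds,
\]
with no stochastic integral to estimate. On $[0,T_S^h]$ both $Y$ and $Y^h$ stay below $2M$, so $g$ is Lipschitz there with some constant $L_{2M}$; I would insert $Y^h(s)$ and bound the within-step oscillation by
\[
|Y^h(s)-Y^h(t_s)|\le g(2M)\tfrac{h}{l_g}+\sigma|B(s)-B(t_s)|\le g(2M)\tfrac{h}{l_g}+\sigma F_B\,(h/l_g)^{H-\rho},
\]
using \eqref{D2} (which gives $\tau_k\le h/l_g$) and Lemma \ref{tckolmogorov}. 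A Grönwall argument then yields $\sup_{[0,T_S^h]}|Y-Y^h|\le C(M,S)(h+F_B h^{H-\rho})e^{L_{2M}S}$; raising to the $p$-th power, taking expectations and invoking $\mathbf{E}(F_B^p)<\infty$ gives the stated $L^p$-convergence, at rate $h^{H-\rho}$.

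For part (ii) the key remark is that the adaptive step makes the drift increment exactly $g(Y^h(t_k))\tau_k=h$, so telescoping produces the exact identity $Y^h(t_k)=y(0)+hk+\sigma B(t_k)$, while $g\ge l_g$ forces $t_k\le hk/l_g$. To get (a)–(b) I would argue by contradiction: if $t_k\to\infty$ with positive probability, then the a.s. sublinear growth of fBm ($B(t)/t\to0$) together with $t_k\le hk/l_g$ gives $\sigma B(t_k)=o(k)$, hence $Y^h(t_k)\ge hk/2$ eventually, so $\tau_k\le h/g(hk/2)$ and, comparing the Riemann sum with $\int^\infty ds/g(s)<\infty$, $\sum_k\tau_k<\infty$, contradicting $t_k\to\infty$. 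Thus $T_e^h<\infty$ a.s.; once $t_k\to T_e^h<\infty$, continuity of $B$ bounds $\sigma B(t_k)$ and the identity gives $Y^h(t_k)/(hk)\to1$, which is (b). For (c), fix $\alpha>1$; by (b) there is a random $k_0$ with $\alpha^{-1}hk\le Y^h(t_k)\le\alpha hk$ for $k\ge k_0$, and monotonicity of $g$ converts this into $h/g(\alpha hk)\le\tau_k\le h/g(\alpha^{-1}hk)$, which sums to the asserted bounds on $T_e^h-t_k$.

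Parts (iii) and (iv) are then consequences of (i). For the first inequality in (iii), on $\{T_M^Y>T_{2M}^{Y^h}\}\cap\{T_M^Y\le S\}$ the scheme reaches $2M$ while $Y<M$, so the two processes differ by at least $M$ at the time $T_{2M}^{Y^h}=T_S^h$; this event lies in $\{\sup_{[0,T_S^h]}|Y-Y^h|\ge M\}$, whose probability tends to $0$ by (i) and Markov, while the remainder sits inside the $h$-independent event $\{T_M^Y>S\}$ and is removed by letting $S\to\infty$ after $h\to0$ (using $T_M^Y<\infty$ a.s.). The second inequality is identical after running the Grönwall estimate of (i) with the two levels exchanged. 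For (iv), applying (iii) with parameter $M/2$ (first inequality) and $M$ (second inequality) sandwiches $T_{M/2}^Y\le T_M^{Y^h}\le T_{2M}^Y\le T_e^Y$ on an event of probability tending to $1$ as $h\to0$; there $|T_M^{Y^h}-T_e^Y|\le T_e^Y-T_{M/2}^Y$, so
\[
\lim_{h\to0}P\bigl(|T_M^{Y^h}-T_e^Y|>\varepsilon\bigr)\le P\bigl(T_e^Y-T_{M/2}^Y>\varepsilon\bigr),
\]
and the right side vanishes as $M\to\infty$ because $T_{M/2}^Y\uparrow T_e^Y$ a.s.

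I expect the genuine obstacle to be part (ii)(a): turning the exact identity into a proof of explosion requires the a.s. sublinear growth of fBm on all of $[0,\infty)$, which is not contained in the finite-horizon Lemma \ref{tckolmogorov} and must be supplied separately, for instance from the law of the iterated logarithm for fBm. A secondary nuisance is making the Grönwall step of (i) rigorous with the random endpoint $T_S^h$ and controlling the $F_B$-moments uniformly, but this is routine.
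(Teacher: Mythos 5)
Your proposal is correct, and for part (i) it takes a genuinely different route from the paper. The paper proves (i) by truncating $g$ into a globally Lipschitz, bounded $\bar g$, introducing the auxiliary pair $(Z,Z^h)$ solving \eqref{edefZ}--\eqref{edefZtilda}, and invoking an adaptation (to random time steps) of the Euler convergence theorem of \cite{mishura}, which yields the rate $h^{2H-1-\rho}$ (Lemma \ref{tcotay}). You instead observe that, because the diffusion is the constant $\sigma$ and the scheme uses the exact increments of $B$, the noise cancels identically in $Y-Y^h$, leaving only the drift-discretization error; a pathwise Gr\"onwall estimate on $[0,T_S^h]$ (where both processes are bounded by $2M$, so local Lipschitzness of $g$ suffices and no truncation is needed) then gives the better rate $h^{H-\rho}$ and avoids citing \cite{mishura} altogether. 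Your argument is cleaner and sharper for this constant-diffusion setting; the paper's route is the one that would survive a non-constant diffusion, where a genuine stochastic-integral error term appears. For parts (ii)--(iv) your argument is in substance the paper's: the paper proves Lemma \ref{laux} ($B(t_k)/k \to 0$ a.s., via the law of the iterated logarithm for fBm, treating separately the cases $\sup_l t_l <\infty$ and $\sup_l t_l=\infty$) and then refers to \cite{groisman} for the remaining hitting-time and Markov-inequality arguments; you reconstruct those arguments explicitly from the telescoped identity $Y^h(t_k)=y(0)+hk+\sigma B(t_k)$ together with $t_k\le hk/l_g$, and you correctly identify the LIL for fBm as the one input not obtainable from the finite-horizon Lemma \ref{tckolmogorov} --- which is precisely the content of the paper's Lemma \ref{laux}, substituting for the martingale tools available in the Brownian case. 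I see no gap in your outline: the Gr\"onwall step at the random endpoint is purely pathwise, the comparison of $\sum_k h/g(hk/2)$ with $\int^\infty ds/g(s)$ uses only monotonicity of $g$, and your sandwich $T^Y_{M/2}\le T^{Y^h}_M\le T^Y_{2M}\le T^Y_e$ in (iv) is exactly the mechanism intended by the reference to \cite{groisman}.
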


\begin{remark}
\label{remark3}
\begin{enumerate}
\item By Theorem \ref{Th aproxY} item \textbf{(i)}, we have that our adaptive method converges to the solution $Y$ of the original SDE \eqref{eqY2} on compact intervals where both $Y$ and $Y^h$ remain bounded. Actually, the statement \textbf{(i)} is a standard convergence result, see, e.g. \cite{mishura}, whenever the sample paths $Y$ of the solution to  (\ref{eqY2}) are uniformly bounded. In the case of an  explosion in finite time, which is more interesting, \textbf{(i)} simply means convergence on regions where the solution $Y$ and its approximation $Y^h$ remain bounded. In general, we do not expect convergence in larger regions. Indeed, even the statement in \textbf{(i)} does not make sense if either $Y$ or $Y^h$ are allowed to explode. 

	\item Item \textbf{(iii)} states that we can approximate the explosion time $T^Y_e$ with the sequence $T_M^{Y^h}$ as expected and as illustrated in our numerical simulations see Section \ref{sec:numerical}. As $T^{Y^h}_M$ is close to $T^h_e$ for large $M$, this also means that explosion times $T^h_e$ for the approximation $Y^h$ and $T^Y_e$ for the true solution $Y$ are close.

\end{enumerate}
\end{remark}

\subsection{Adaptive Euler scheme for non-constant $\sigma$}
In this section, we extend our adaptive Euler approximation scheme to cover the case of non-constant $\sigma$. That is, we approximate the solution $X$ to the SDE \eqref{1}. For this purpose, we require the following additional assumption on the coefficients $\sigma$ and $b$ in (\ref{1}), ensuring that the steps in  the approximating scheme are well defined.

\noindent
\textbf{Assumption E:} Coefficient functions $b$ and $\sigma$ satisfy:
\begin{enumerate}
    \item There is a positive constant $L_2$ such that
\begin{equation*}\label{D}
0< L_2\leq \sigma(x) \leq b(x), \ \ \ \ x\in \mathbb{R},
\end{equation*}
\item $b$ satisfies \eqref{eqint_b},
\item $\sigma$ satisfies $\int_{x(0)}^\infty \frac{ds}{\sigma(s)} = \infty$, that is $\Theta(\infty)=\infty$ and $\Theta(-\infty)=-\infty$.
\end{enumerate}

Let $X$ now be defined by \eqref{1}. Under condition \textbf{D}, by Theorem \ref{teo_explosion}, the solution of \eqref{1} explodes in finite time $T_e^X$ if and only if \eqref{eqint_b} holds. 
In particular, $X$ explodes in finite time under conditions \textbf{D} and \textbf{E} as Assumption \textbf{E}(3) implies $\Theta(\infty)=\infty$. Note also that by Assumption \textbf{E}(1), the function $g$ defined by \eqref{eqg} satisfies 
\begin{equation}\
0< l_g \leq 1 \leq g(x), \ \ \ \ x\in \mathbb{R}.  \nonumber
\end{equation} 
Hence the process $Y$ defined by \eqref{theta(Y)} satisfies the hypotheses of Theorem \ref{Th aproxY}. Based on the approximation $Y^h$  for solution $Y = \Theta(X)$ with $\Theta$ given by \eqref{eqTheta}, we obtain an approximation to the solution of process $X=\Theta^{-1}(Y)$ as follows:

Let $Y^h(t_{k+1})$ be defined by \eqref{eqdefaprox1} with $\tau_k$ and $t_k$ defined as in Section \ref{sec:Euler-constant}. We introduce the approximation scheme for process $X$ by

\begin{equation}
\label{eqdefXh}
\begin{cases}
X^h(t_0) &= x(0),\\
X^h(t_{k+1})  &=\Theta^{-1}(Y^h(t_{k+1})),
\end{cases}
\end{equation}
where $\Theta$ is given by \eqref{eqTheta}. The following result gives the convergence of the approximation on regions where solution $X$ and its approximation $X^h$ are bounded, in the spirit of Theorem \ref{Th aproxY} item \textbf{(i)}.

\begin{theorem} 
\label{Th 3}
Let  $\sigma $ and $b$ be functions that satisfy conditions \textbf{D} and \textbf{E}, and $S>0$  and  $M > \Theta(x(0))$ be fixed. Then, for any $p\geq 1$,
\begin{equation*}
\lim_{h\to 0}\mathbf{E}\left(\sup_{0\leq t\leq T_S^h  }|X(t)-X^h(t)|^p\right)=0,
\end{equation*}
where  $T_S^h:=T_M^Y\wedge T_{2M}^{Y^h} \wedge S$ with $T_M $ and $T_M^h $ given by (\ref{edefrm2}) and  (\ref{edefrmh}) respectively.
\end{theorem}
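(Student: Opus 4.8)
The plan is to deduce the claim directly from Theorem~\ref{Th aproxY}(i) by transferring the $L^p$-convergence of $Y^h$ to $Y$ through the Lamperti map. The crucial observation is that the stopping time $T_S^h = T_M^Y \wedge T_{2M}^{Y^h} \wedge S$ is expressed purely in terms of the transformed processes $Y$ and $Y^h$, so it is exactly the interval on which the previous theorem already supplies convergence; the only additional ingredient needed is that $\Theta^{-1}$ behaves like a Lipschitz function on the range that $Y$ and $Y^h$ can reach before time $T_S^h$.

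First I would record that, by construction, $X(t) = \Theta^{-1}(Y(t))$ and (extending \eqref{eqdefXh} to continuous time via the interpolation \eqref{eqdefaprox2}) $X^h(t) = \Theta^{-1}(Y^h(t))$, so that
\begin{equation*}
|X(t) - X^h(t)| = \left|\Theta^{-1}(Y(t)) - \Theta^{-1}(Y^h(t))\right|.
\end{equation*}
Next I would localize: for $t \le T_S^h$ one has $t \le T_M^Y$ and $t \le T_{2M}^{Y^h}$, whence $|Y(t)| \le M$ and $|Y^h(t)| \le 2M$, so both $Y(t)$ and $Y^h(t)$ lie in the fixed compact interval $[-2M, 2M]$. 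Since $\Theta' = 1/\sigma$ is continuous and strictly positive under \textbf{D}, the inverse $\Theta^{-1}$ is continuously differentiable with $(\Theta^{-1})' = \sigma \circ \Theta^{-1}$; hence $\Theta^{-1}$ is Lipschitz on $[-2M, 2M]$ with constant $L_M := \sup_{|y| \le 2M} \sigma(\Theta^{-1}(y)) < \infty$, which depends on $M$ but not on $h$. Consequently $|X(t) - X^h(t)| \le L_M |Y(t) - Y^h(t)|$ for every $t \le T_S^h$.

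Taking the supremum over $t \in [0, T_S^h]$, raising to the power $p$ and taking expectations then yields
\begin{equation*}
\mathbf{E}\left(\sup_{0 \le t \le T_S^h}|X(t) - X^h(t)|^p\right) \le L_M^p\, \mathbf{E}\left(\sup_{0 \le t \le T_S^h}|Y(t) - Y^h(t)|^p\right),
\end{equation*}
and, $M$ being fixed, $L_M^p$ is a constant independent of $h$. Assumption \textbf{E} ensures that $g$ is strictly positive, locally Lipschitz, non-decreasing, bounded away from zero and satisfies \eqref{eqint_b}, so the hypotheses of Theorem~\ref{Th aproxY}(i) hold and the right-hand side tends to $0$ as $h \to 0$. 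This proves the stated convergence. I expect the main (and essentially the only) delicate point to be the localization step: one must verify that the interval $[0, T_S^h]$, defined through $Y$ and $Y^h$, confines both processes to a single compact set uniformly in $h$, so that the merely local Lipschitz property of $\Theta^{-1}$ can be upgraded to a uniform constant $L_M$; once this is secured, all the fractional-calculus difficulties remain safely packaged inside Theorem~\ref{Th aproxY}(i).
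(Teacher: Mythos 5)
Your proposal is correct and follows essentially the same route as the paper's own proof: localize via $T_S^h$ so that $|Y(t)|\leq M$ and $|Y^h(t)|\leq 2M$, use the local Lipschitz property of $\Theta^{-1}$ on the resulting compact set to get a constant $L_M$ independent of $h$, and then invoke Theorem~\ref{Th aproxY}\textbf{(i)}. Your explicit identification of the Lipschitz constant via $(\Theta^{-1})' = \sigma\circ\Theta^{-1}$ is a small refinement of the paper's appeal to local Lipschitzness, but the argument is the same.
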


\begin{proof}
For $t\in [0, T_S^h]$, we have that $|Y(t)|\leq M$ and $|Y^h(t)|\leq 2M$. Since  $\Theta ^{-1}$ is  a locally Lipschitz function,

    \begin{eqnarray}
        |X(t)- X^h(t)|= |\Theta^{-1}(Y(t))-\Theta^{-1}(Y^h(t))| \leq C_M|Y(t)- Y^h(t)|.
    \end{eqnarray}
 Hence 
 $$\mathbf{E}\left(\sup_{0\leq t\leq T_S^h  }|X(t)-X^h(t)|^p\right)\leq C_M^p\mathbf{E}\left(\sup_{0\leq t\leq T_S^h  }|Y(t)-Y^h(t)|^p\right)$$
 and the result is obtained by Theorem \ref{Th aproxY} \textbf{(i)}.

 \qed

\end{proof}

\begin{remark}
\label{Re time2}
It follows directly from the transformations $X= \Theta^{-1}(Y)$ and $X^h = \Theta^{-1}(Y^h)$ that the explosion times for true solutions $X$ and $Y$ and the approximated solutions $X^h$ and $Y^h$ are equal. As such, one obtains Theorem \ref{Th aproxY} \textbf{(iii)} for $X$ and $X^h$ directly, allowing us to approximate the explosion time $T_e^X$ with $T_M^{X^h}$, cf. Remark \ref{remark3}. 
\end{remark}

\section{Numerical examples}
\label{sec:numerical}
We illustrate our adaptive numerical scheme with two simulation studies related to the examples in Section \ref{sec:examples}. For effective simulations, we  use of the prediction formula of the fractional Brownian motion presented in \cite{So-Vi}. That is, the future law of $B(t)$ given information up to $u<t$ is Gaussian with (random) mean
$$
B(u) - \int_0^u \Psi(t,s|u)dB(s),
$$
where 
$$
\Psi(t,s|u) = -\frac{\sin(\pi(H-1/2))}{\pi}s^{1/2-H}(u-s)^{1/2-H}\int_u^t \frac{z^{H-1/2}(z-u)^{H-1/2}}{z-s}dz,
$$
and a deterministic covariance
$$
r(t,s|u) =R(t,s)-\int_0^u K(t,v)K(s,v)dv
$$
with $R$ being the covariance of $B$
and $K$ the Volterra Kernel of the fractional Brownian motion given by 
$$
K(t,s)=d_H \left[  \left(  \frac{t}{s} \right)^{H-1/2} (t-s)^{H-1/2} -(H-1/2) s^{1/2-H} \int_s^t z^{H-3/2} (z-s)^{H-1/2} dz \right].
$$
Consequently, given the information up to $t_k$ (in which case we know $t_{k+1}$ as  well), we have, with random times $t_k$ and $t_{k+1}$,
\begin{equation}\label{CondNormal}
    B(t_{k+1})-B(t_k) \sim N(\mu,\sigma^2)
\end{equation}
with 
$$
\mu = \int_0^{t_k} \Psi(t_{k+1},s|t_k)dB(s), \quad \mbox{and} \quad 
\sigma^2 = r(t_{k+1},t_{k+1}|t_k).
$$
This allows us to generate increments $B(t_{k+1})-B(t_k)$ as in \eqref{CondNormal} by using MATLAB.

{\bf Example 1:} The first example  is given as in equation \eqref{ejemplo1}, defined via  
$$
dX(t)= X^N(t) dt + X(t) dB(t)
$$
with $N=4$, $H=0.65$, and $x(0)=10$; $Y(0)=0$. We set $h=0.1$.
\begin{figure}[!h!tb]
    \centering
    \includegraphics[width=10cm, height=6cm] {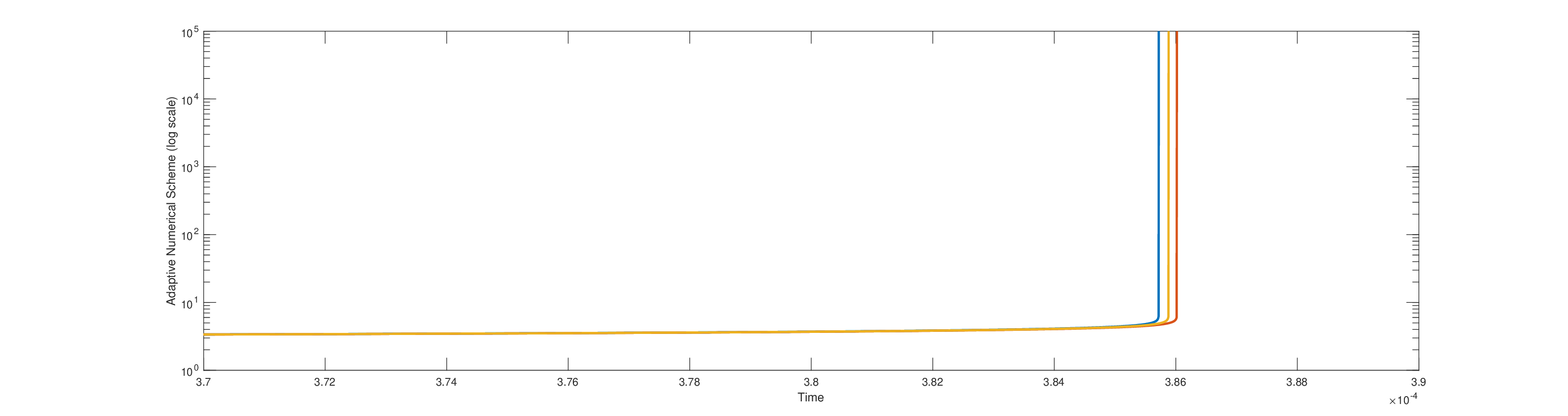}
    \caption{Adaptive Euler scheme: Sample paths with explosions for the solution of \eqref{ejemplo1}.}
    \label{fig:SP-1-1}
\end{figure}
Figure \ref{fig:SP-1-1} shows few simulated sample paths (in the log-scale) of a solution to \eqref{ejemplo1} from which explosion is clearly visible. We also note that the explosion times are relatively small. This is expected due to the exponential growth of the drift term that leads to rapid explosion. 
  \begin{figure}[!h!b]
         \centering
    \includegraphics[width=10cm, height=6cm] {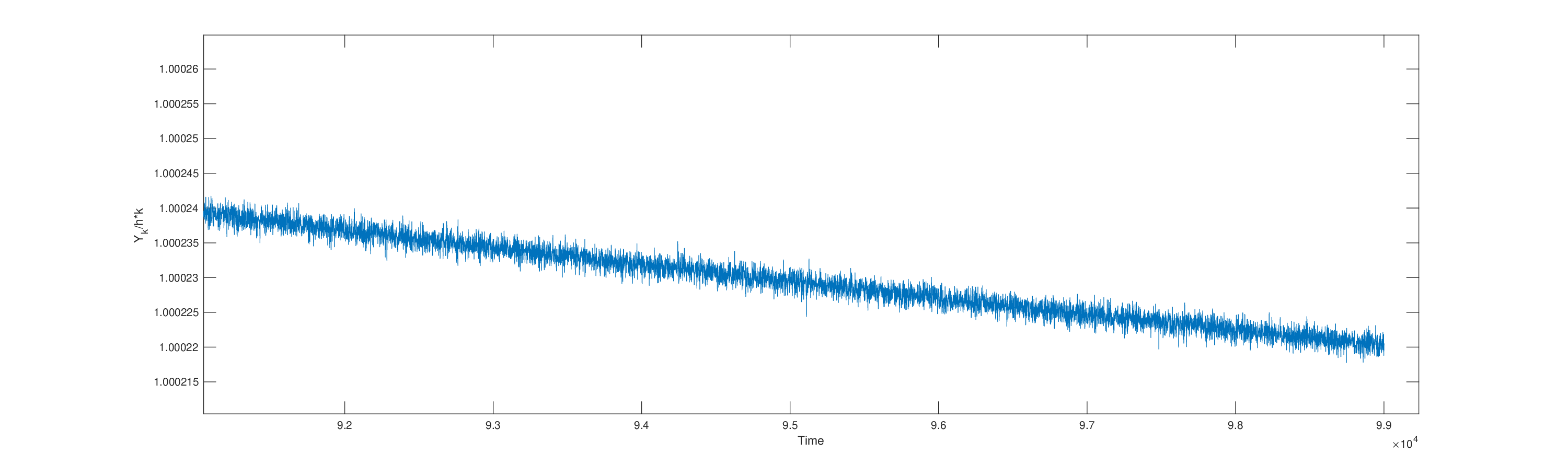}
        \caption{Convergence of $\frac{Y^h(t_k)}{hk}$ to 1 related to Equation \eqref{ejemplo1}.}
       \label{fig:AS-1-1}
    \end{figure}
Figure \ref{fig:AS-1-1} shows, as demonstrated in theorem \ref{Th aproxY} (b), the convergence of $\frac{Y^h(t_k)}{hk}$ to  $1$ a.s., as $k \to \infty$. Note, in particular, the scales of the $x$- and $y$-axes. 
  \begin{figure}[!h!b]
        \centering
        \includegraphics[width=10cm, height=6cm] {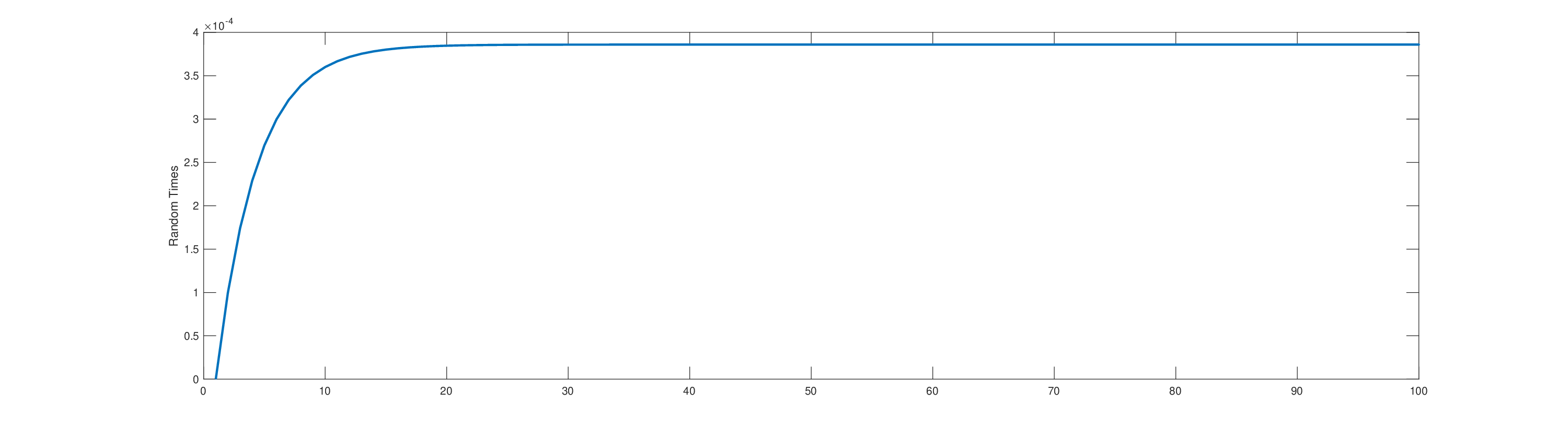}
        \caption{Random times approximating explosion time.}
       \label{fig:2AS-1-1}
    \end{figure}
  Figure \ref{fig:2AS-1-1} shows the random times $t_k$ approximating the explosion time. In Figure \ref{fig:2AS-1-1}, the number of time steps is represented in the $x$-axis. Finally, Figure  \ref{fig:Ex1-zoom+as-1} illustrates the explosion of $Y$ and $X$ via single trajectories. Observe that, see Figure \ref{fig:Ex1-FBM-1}, that the increments of the fractional Brownian motion are relatively small. This is because the conditional variance becomes smaller since differences $t_{k+1}-t_k$ are small (due to rapid explosion). 
\begin{figure}[!h!tb]
 \begin{subfigure}[!h!tb]{0.7\textwidth}
  \centering
         \includegraphics[width=0.7\textwidth]{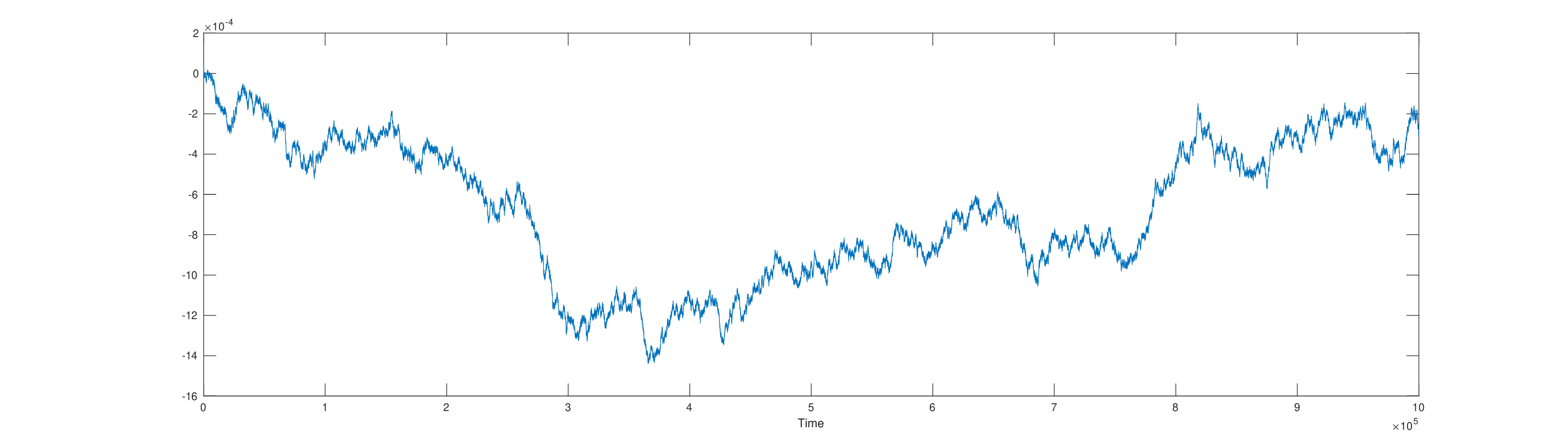}
        \caption{The zoomed trajectory of a single path from Figure  \ref{fig:SP-1-1}.}
         \label{fig:Ex1-FBM-1}
     \end{subfigure}
\begin{subfigure}[!h!tb]{0.7\textwidth}
         \centering
         \includegraphics[width=0.7\textwidth]{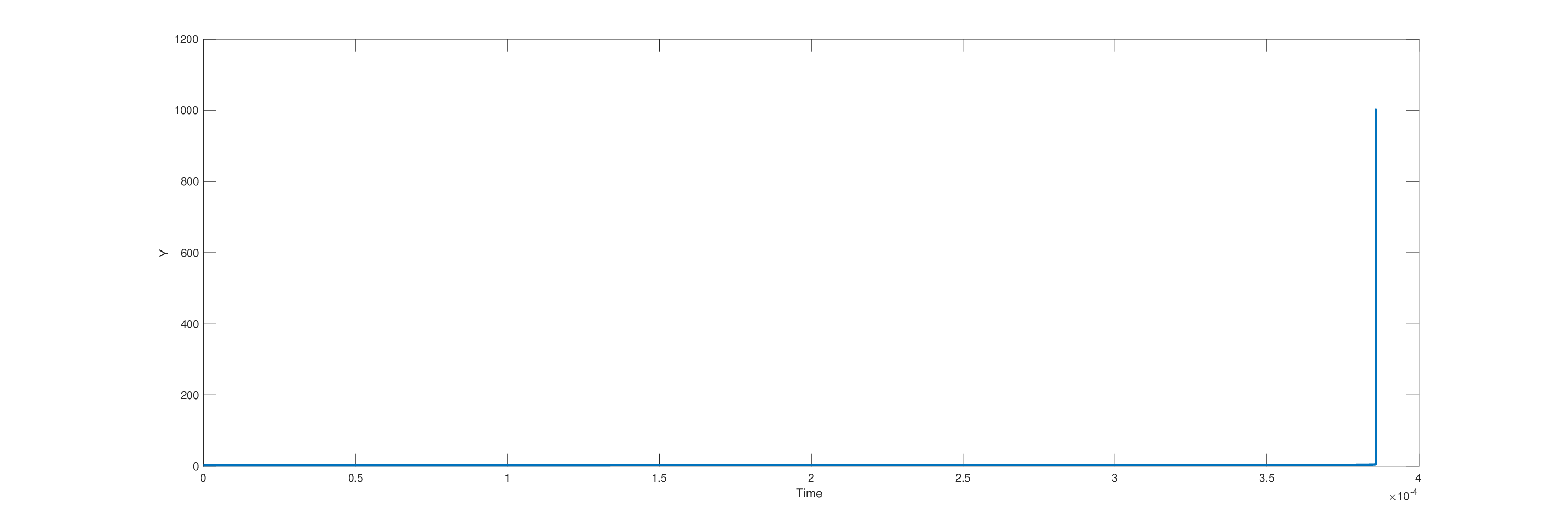}
         \caption{One trajectory of solution $Y$.}
         \label{fig:Ex1-Nuevo}
     \end{subfigure}   
     \begin{subfigure}[!h!tb]{0.7\textwidth}
         \centering
         \includegraphics[width=0.7\textwidth]{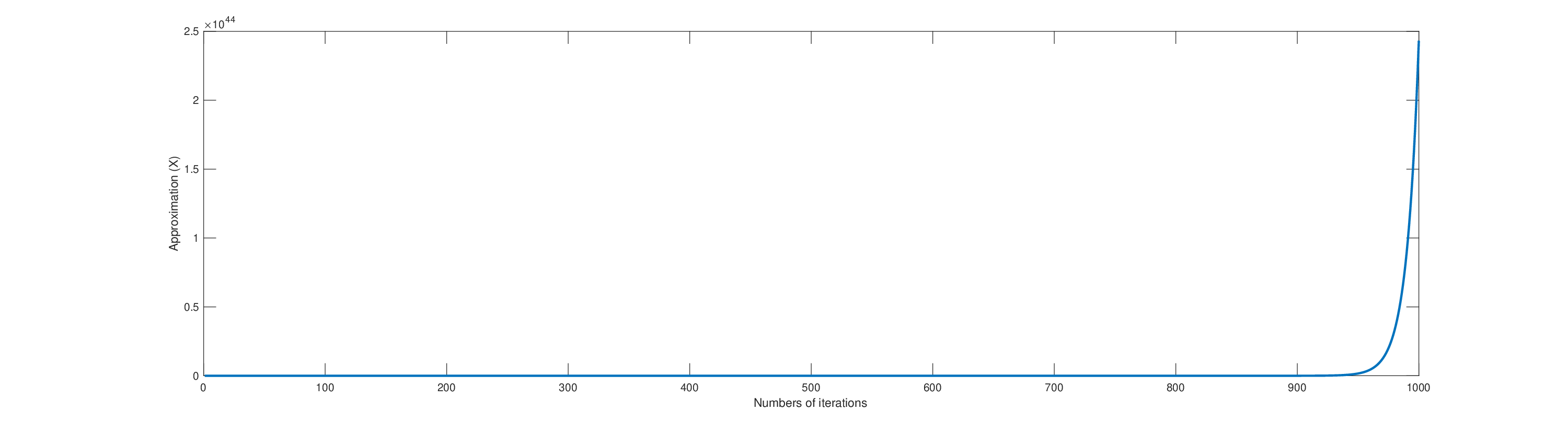}
         \caption{One trajectory of solution $X$.}
         \label{fig:trayectoryX-1}
     \end{subfigure}
       \caption{The behavior of one of the $X$ and $Y$ path related to Equation \eqref{ejemplo1}.}
       \label{fig:Ex1-zoom+as-1}
\end{figure}
\newpage

\vspace{1cm}
{\bf Example 2:} The second example  is given as in equation \eqref{example2} with parameters 
$H=0.65$; $h=0.1$; $p=1.1$, $x(0)=10$, and  $q=0.5$. Figure \ref{fig:SP-2}  shows few simulated sample paths for solution $Y$ (stopped when the computed solution reaches $10^5$) from which explosion is clearly visible, while convergence of $\frac{Y^h(t_k)}{hk}$ towards 1 is illustrated in Figure \ref{fig:AS-1}.
\begin{figure}[!h!tb]
    \centering
    \includegraphics[width=10cm, height=6cm] {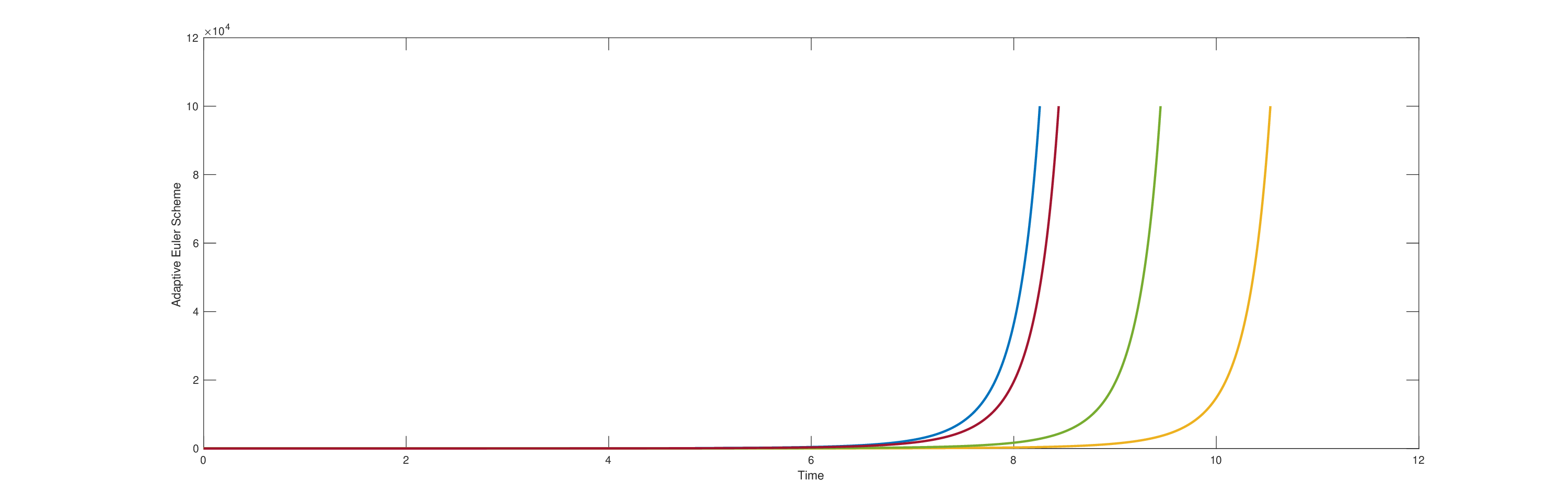}
    \caption{Adaptive Euler scheme: sample paths with explosion for the solution of \eqref{example2}.}
    \label{fig:SP-2}
\end{figure}
\begin{figure}[!h!tb]
         \centering
         \includegraphics[width=10cm, height=6cm] {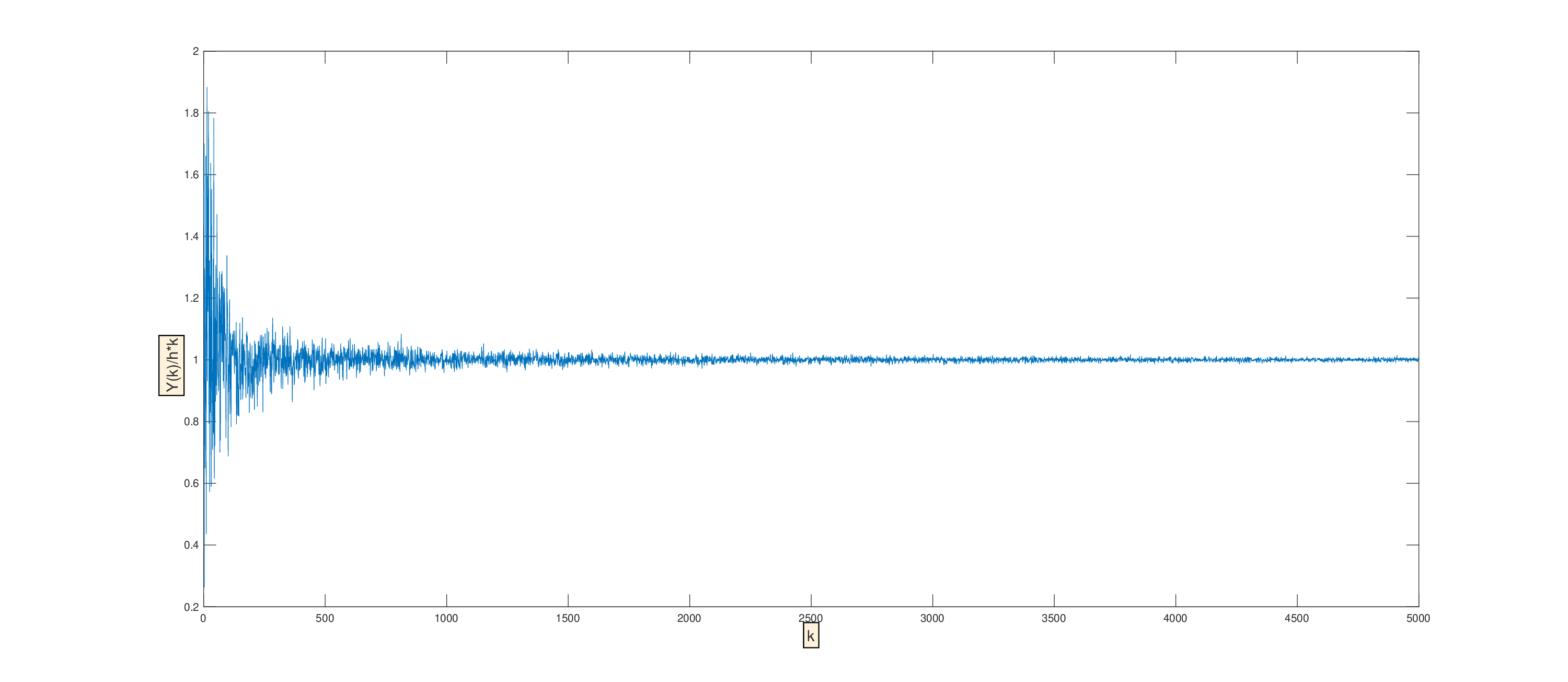}
         \caption{Convergence of $\frac{Y^h(t_k)}{hk}$ to 1 related to Equation \eqref{example2}.}
         \label{fig:AS-1}
     \end{figure}
Figure \ref{fig:Random} shows the random times approximating the explosion time, and finally, Figure  \ref{fig:Ex2-zoom+asXY} illustrates the  explosion of $Y$ and $X$ via single trajectories. In comparison to the previous example, note the explosion in this case does not happen as rapidly, as the drift term does not grow as fast. 
\begin{figure}[h!]
         \centering
         \includegraphics[width=10cm, height=6cm] {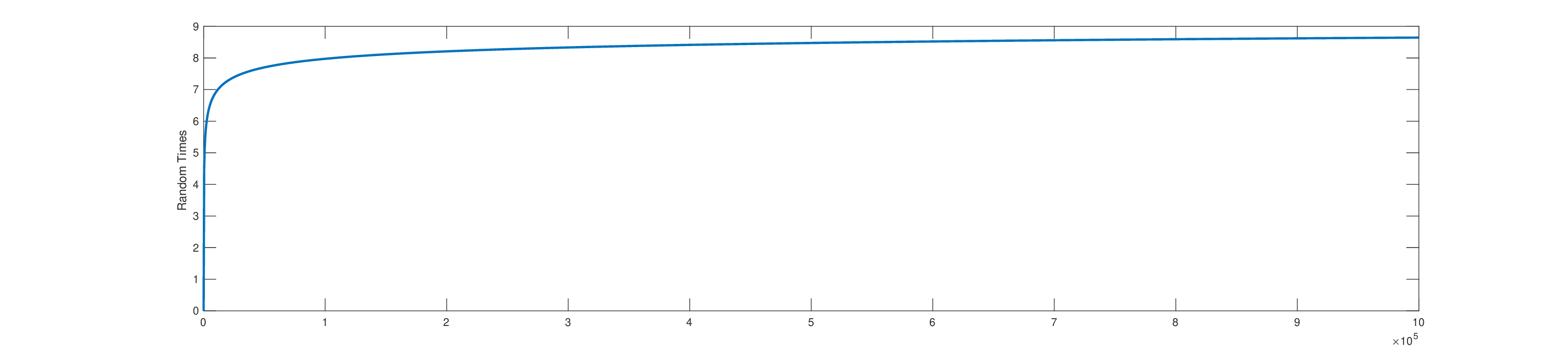}
         \caption{Random times approximating explosion time in Equation \eqref{example2}.}
         \label{fig:Random}
     \end{figure}
\begin{figure}[!h!tb]
     \begin{subfigure}[!h!tb]{0.7\textwidth}
         \centering
         \includegraphics[width=0.7\textwidth]{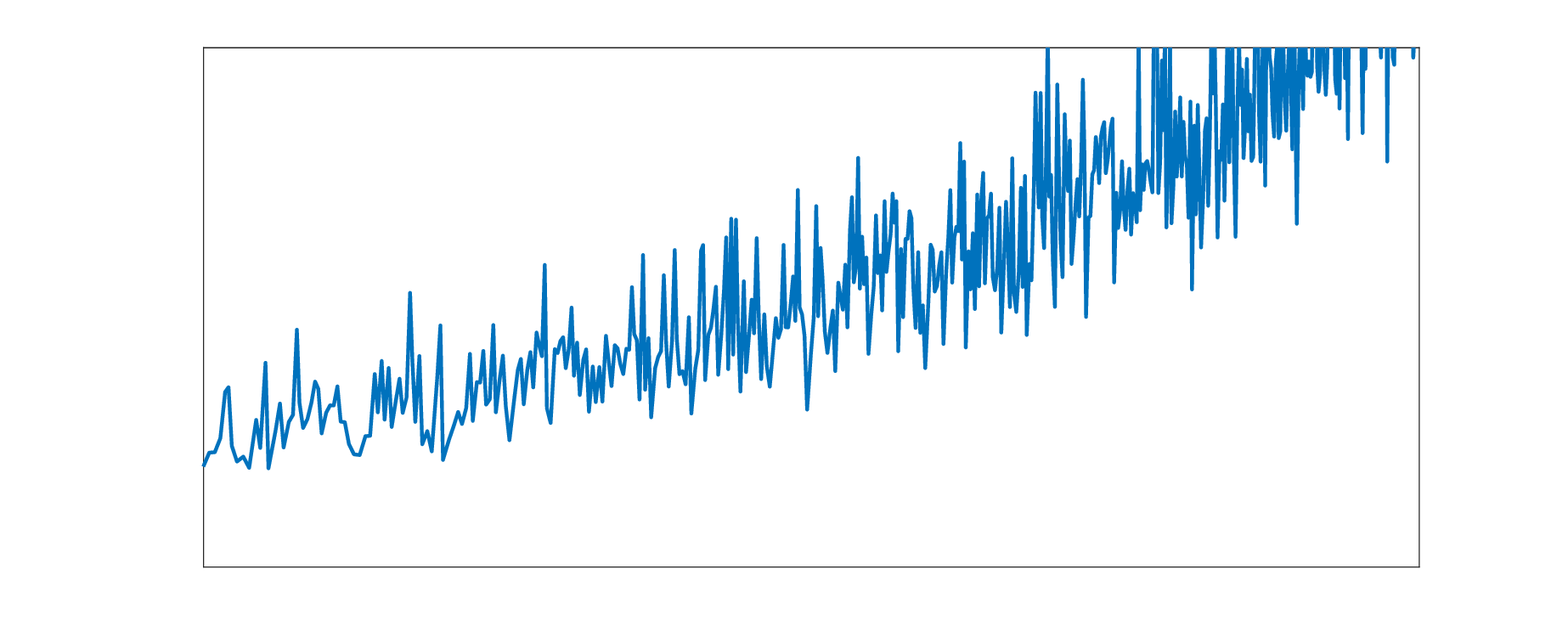}
         \caption{The zoomed trajectory of a single path from Figure  \ref{fig:SP-2}.}
         \label{fig:Ex2-zoom}
     \end{subfigure}
        \label{fig:Ex2-zoom+as}
     \begin{subfigure}[!h!tb]{0.7\textwidth}
         \centering
         \includegraphics[width=0.7\textwidth]{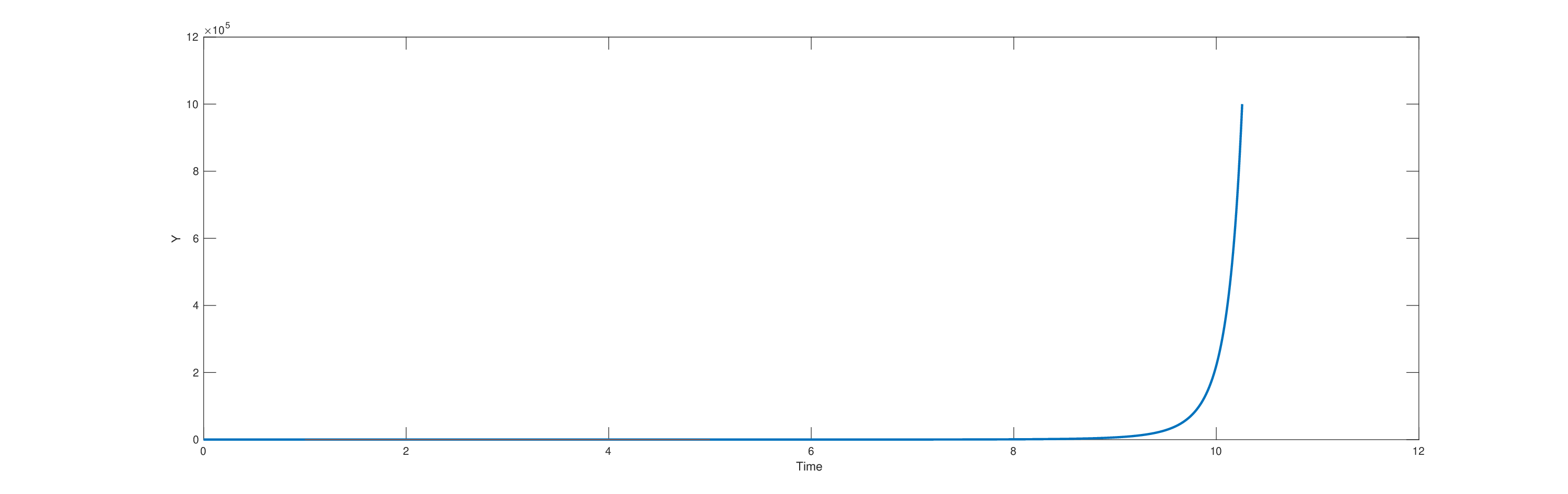}
         \caption{One trajectory of solution $Y$.}
         \label{fig:Ex2-Nuevo}
     \end{subfigure}   
     \begin{subfigure}[!h!tb]{0.7\textwidth}
         \centering
         \includegraphics[width=0.7\textwidth]{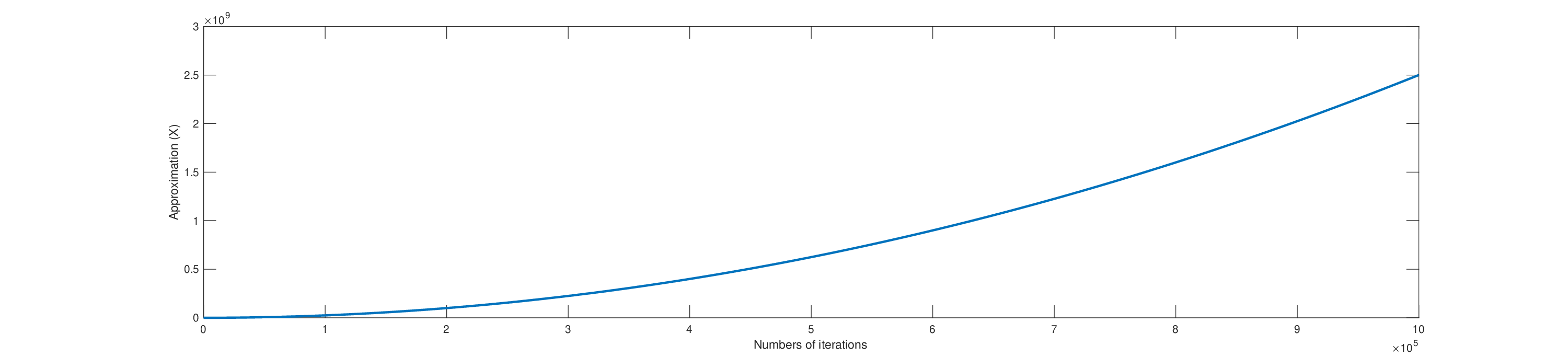}
         \caption{One trajectory of solution $X$.}
         \label{fig:trayectoryX}
     \end{subfigure}
        \caption{The behavior of one $X$ and $Y$ path related to Equation \eqref{example2}.}
        \label{fig:Ex2-zoom+asXY}
\end{figure}

 \newpage


{\bf Acknowledgments:} 
Johanna Garz\'on was partially supported by  HERMES project 58557.
Soledad
Torres was partially supported by Fondecyt project number 1230807 and Mathamsud
SMILE AMSUD230032,Proyecto ECOS210037 (C21E07),
Mathamsud AMSUD210023 and Fondecyt Regular No. 1221373.
The authors acknowledge the support of Centro de Modelamiento Matemático (CMM)
FB210005, BASAL funds for centers of excellence from ANID-Chile.

\appendix 

\section{Proof of Theorem \ref{Th aproxY}}
\label{aped.A}
To prove Theorem \ref{Th aproxY} (i), we will first give a priori bounds on our adaptive numerical scheme that controls  the  increments of the approximation $Y^h$ defined in (\ref{eqdefaprox1}) and the convergence of the adaptive scheme $Y^h$ to $Y$ when we are working with a bounded function $g$. The arguments are similar to the arguments presented in \cite{mishura}. However, due to the presence of random times instead of deterministic times in our approximation method, we prefer to provide the key steps  for the reader's convenience.  

\begin{lemma}
\label{tcotay}
Suppose that $g$ is a positive, locally Lipschitz and non-decreasing function such that for some $L_g, l_g>0$,
$$0<l_g\leq g(x) \leq L_g, \forall x. $$

Then for any $0<\rho<H$ and any $S>0$ there exists a non-negative random variable $F_{{Y^h}}$  such that $\mathbf{E}(F_{{Y^h}}^p)< \infty$ for all $p>1$ and
\begin{equation}
\label{cota priori}
|Y^h(s)-Y^h(u)|\leq F_{Y^h}(s-u)^{H-\rho}, \ \ 0\leq u<s\leq S. 
\end{equation}
Moreover, it holds that
 $$\sup_{0\leq t\leq S}|Y(t)-Y^h(t)|\leq F h^{2H-1-\rho}$$ 
 \\
for a random variable $F$ independent of $h$ and satisfying $\mathbf{E}(F^p)<\infty$ for all $p\geq 1$.
\end{lemma}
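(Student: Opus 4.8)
The plan is to handle the two assertions in order: the a priori Hölder bound \eqref{cota priori} is elementary and serves as an ingredient for the convergence rate.

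\textbf{The Hölder bound.} From the continuous interpolation \eqref{eqdefaprox2}, for $0\le u<s\le S$,
\[
Y^h(s)-Y^h(u)=\int_u^s g(Y^h(t_v))\,dv+\sigma\bigl(B(s)-B(u)\bigr).
\]
Since $g\le L_g$, the drift contributes at most $L_g(s-u)\le L_g S^{1-H+\rho}(s-u)^{H-\rho}$, while Lemma \ref{tckolmogorov} bounds the driving term by $\sigma F_B(s-u)^{H-\rho}$. Hence \eqref{cota priori} holds with $F_{Y^h}:=L_gS^{1-H+\rho}+\sigma F_B$, and $\mathbf{E}(F_{Y^h}^p)<\infty$ for all $p$ because $F_B$ enjoys this (Lemma \ref{tckolmogorov}).

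\textbf{The convergence rate.} The decisive feature is that $\sigma$ is constant: subtracting \eqref{eqdefaprox2} from the integral form of \eqref{eqY2}, the stochastic integrals (both equal to $\sigma B(t)$) cancel exactly, so
\[
e(t):=Y(t)-Y^h(t)=\int_0^t\bigl[g(Y(v))-g(Y^h(t_v))\bigr]\,dv
\]
is a purely pathwise quantity, with no stochastic integral left to estimate. I would split the integrand into $[g(Y(v))-g(Y^h(v))]+[g(Y^h(v))-g(Y^h(t_v))]$. Both $Y$ and $Y^h$ remain in a common random interval $[-N,N]$ — for $Y^h$ by $|Y^h(t)|\le |y(0)|+L_gS+\sigma\sup_{[0,S]}|B|$, and for $Y$ by Theorem \ref{texistenciax} (linear growth being automatic for bounded $g$) — so the first bracket is $\le K_N|e(v)|$ with $K_N$ the Lipschitz constant of $g$ on $[-N,N]$. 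For the discretization term, \eqref{edeftau} and \eqref{D2} give $v-t_v\le h/l_g$, so by \eqref{cota priori}
\[
\bigl|g(Y^h(v))-g(Y^h(t_v))\bigr|\le K_N F_{Y^h}(v-t_v)^{H-\rho}\le K_N F_{Y^h}(h/l_g)^{H-\rho}.
\]
Integrating and applying Gronwall's inequality yields $\sup_{[0,S]}|e(t)|\le K_NF_{Y^h}l_g^{-(H-\rho)}S\,e^{K_NS}\,h^{H-\rho}$. Since $H<1$ gives $H-\rho\ge 2H-1-\rho$, and $h<1$, one has $h^{H-\rho}\le h^{2H-1-\rho}$; thus the claim follows with $F:=K_NF_{Y^h}l_g^{-(H-\rho)}S\,e^{K_NS}$.

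\textbf{Main obstacle.} The subtle point is not the rate but the requirement $\mathbf{E}(F^p)<\infty$ for all $p$. The factor $F_{Y^h}$ has all moments, but the Gronwall constant $e^{K_NS}$ carries the Lipschitz constant $K_N$ of $g$ over the \emph{random} window $[-N,N]$, while $N$ is only controlled through $\sup_{[0,S]}|B|$, which has Gaussian tails; a bounded non-decreasing $g$ may well have an unbounded local Lipschitz constant, so $e^{K_NS}$ need not be integrable. I would resolve this by using that this lemma is invoked for the truncated coefficients of Theorem \ref{Th aproxY}, for which $g$ can be taken globally Lipschitz with a deterministic constant $K$; then $e^{KS}$ is deterministic and $\mathbf{E}(F^p)<\infty$ reduces, via Hölder's inequality, to the (already available) moment bounds on $F_{Y^h}$ and $F_B$. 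The remaining verifications — boundedness of $Y$ with finite moments via Theorem \ref{texistenciax} and well-posedness of $Y^h$ up to time $S$ — are routine.
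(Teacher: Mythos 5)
Your proof of the H\"older bound \eqref{cota priori} is exactly the paper's: split \eqref{eqdefaprox2} into drift and noise, bound the drift by $L_g(s-u)\le L_gS^{1-H+\rho}(s-u)^{H-\rho}$, invoke Lemma \ref{tckolmogorov} for the noise, and take $F_{Y^h}=L_gS^{1-H+\rho}+\sigma F_B$. For the convergence rate, however, you take a genuinely different route. The paper disposes of this part in two lines by citing Theorem 3.1 of \cite{mishura} and remarking that its pathwise arguments can be adapted to the random grid $\{t_k\}$; the rate $h^{2H-1-\rho}$ in the statement is inherited from that general (non-constant diffusion) result. You instead exploit the fact that $\sigma$ is constant: the stochastic integrals cancel exactly in $Y-Y^h$, leaving a purely pathwise error $e(t)=\int_0^t[g(Y(v))-g(Y^h(t_v))]\,dv$, which you handle by the standard decomposition, the step bound $v-t_v\le h/l_g$ from \eqref{edeftau} and \eqref{D2}, the a priori bound \eqref{cota priori}, and Gronwall. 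This is self-contained, avoids having to justify the transfer of a deterministic-grid theorem to random grids, and actually yields the stronger rate $h^{H-\rho}$, which implies the stated $h^{2H-1-\rho}$ since $h<1$ and $H<1$; the paper's citation, by contrast, keeps the proof short and is the argument that would survive for non-constant $\sigma$, where no cancellation occurs and $2H-1$ is the true order.

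Your discussion of the moment bound $\mathbf{E}(F^p)<\infty$ is a point in your favor rather than a gap. As you note, under the lemma's literal hypotheses ($g$ only locally Lipschitz, though bounded), the Gronwall factor $e^{K_NS}$ involves the Lipschitz constant over a random window controlled only by $\sup_{[0,S]}|B|$, and integrability can fail. The paper's own proof is silent on this, but it has the same implicit requirement: the cited result in \cite{mishura} assumes a bounded, globally Lipschitz drift. Your resolution --- observing that the lemma is invoked in the proof of Theorem \ref{Th aproxY} \textbf{(i)} only for the truncated coefficient $\bar{g}$ of \eqref{edefb}, which is globally Lipschitz with a deterministic constant, so that $e^{KS}$ is deterministic and all moments of $F$ reduce to those of $F_{Y^h}$ --- is precisely consistent with how the paper uses the lemma, and makes explicit a hypothesis the paper leaves tacit.
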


\begin{proof}
From Lemma \ref{tckolmogorov} and \eqref{eqdefaprox2} we deduce
\begin{eqnarray*}
\label{eqcotay-y}
|Y^h(s)-Y^h(u)|&\leq& \int_u^{s} |g(Y^h(t_v))|dv + \sigma \left| \int_u^{s} dB(v) \right|\notag\\
&\leq & L_g(s-u) + \sigma  |B(s) -B(u)| \notag\\
& \leq & L_gS^{1-H+\rho}(s-u)^{H-\rho} + \sigma  F_{B}(s-u)^{H-\rho}\notag\\
& \leq &  F_{Y^h}(s-u)^{H-\rho},
    \end{eqnarray*}
    where $ F_{Y^h}=L_gS^{1-H+\rho}+ \sigma F_{B}$ and hence $\mathbf{E}(F_{Y^h}^p)<\infty$ for all $p\geq 1$.
For the other claim, we follow the proof of Theorem 3.1 in \cite{mishura}. Indeed, by noting that in \cite{mishura} the arguments hold almost surely, and hence can be adapted for the random times $t_k$. 
Hence a priori bound \eqref{cota priori} leads to 
 $$\sup_{0\leq t\leq S}|Y(t)-Y^h(t)|\leq F h^{2H-1-\rho},$$
where $F$ is a random variable such that $\mathbf{E}(F^p)<\infty$ for all $p\geq 1$ and $F$  does not depend on $h$. This completes the proof. 
\qed
\end{proof}

\

\b  {\bf Proof of Theorem \ref{Th aproxY} (i): } We define the globally Lipschitz and bounded function  $\bar{g}$ :
\begin{equation}
    \label{edefb}
    \bar{g}(x)=
\begin{cases}
    g(x) & \quad \text{if} \quad |x|\leq 2M, \\
    g(2M) & \quad \text{if} \quad x\geq 2M,\\
    g(-2M) & \quad\text{if} \quad x\leq -2M.
\end{cases}
\end{equation}
From Theorem \ref{texistenciax}, and $S>0$, there exists a unique solution $Z$, of the equation 
 \begin{equation}
    \label{edefZ}
    dZ(t)= \bar{g}(Z(t))dt + \sigma dB(t), \quad \quad Z(0)=y(0), \ t\in[0, S],
\end{equation}
and this solution $Z$ satisfies, for any $0<\rho<H$ and almost surely,  \begin{equation}
\label{ecotaz}
\sup_{0\leq s\leq t \leq S}|Z(t)-Z(s)|< F_{Z}|t-s|^{H-\rho},
\end{equation}
where $F_{Z}=F_{Z} (\omega)$ satisfies $\mathbf{E}(F_{Z}^p)<\infty$ for all $p\geq 1.$

\

 Let $Z^h$ now be the approximation given by 
 \begin{equation}
    \label{edefZtilda}
{Z}^h(t_{n+1})  ={Z}^h{(t_n)} + \bar{g}({Z}^h(t_n))\tau_{n} + [ B(t_{n+1})-B(t_{n})], \quad \quad {Z}(0)=y(0).
\end{equation}
As $g(x)=\bar{g}(x)$  for $|x|\leq 2M$ and $T^h_S=T^{Z}_M\wedge T_{2M}^{Z^h} \wedge S$, we have $Y(t)=Z(t)$ and ${Y}^h(t)={Z}^h(t)$ for $t\in [0, T_S^h]$, and thus
$$\mathbf{E}\left(\sup_{0\leq t \leq T_S^h}|Y(t)-Y^h(t)|^p\right)=\mathbf{E}\left(\sup_{0\leq t \leq T_S^h}|Z(t)-Z^h(t))|^p\right)\leq \mathbf{E}\left(\sup_{0\leq t \leq S}|Z(t)-Z^h(t)|^p\right)$$
from which the claim followed by Lemma \ref{tcotay} with $\rho < 2H-1$  applied to the process $Z$ and its approximation $Z^h$.
\qed

\vspace{0.3cm}

The rest of the proof of Theorem  \ref{Th aproxY} is based on the following lemma: In contrast to the Brownian motion, we do not have martingale property at our disposal; 
 instead, we rely on the law of the iterated logarithm for  fBm.

\begin{lemma}   Let $\{t_k\}$ be defined by $t_0=0$, $t_{k+1}= t_k + \tau_k$, where $\tau_k$ is given by 
$\tau_k= \frac{h}{g(Y^h(t_k))}$ (see \eqref{edeftau}).  Then
\label{laux}
$$\lim_{k\to \infty} \frac{  B({t_k})  }{k}=0 \quad \text{a.s.}
$$
\end{lemma}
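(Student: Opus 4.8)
The plan is to reduce the claim to a sample-path growth estimate on $B$, taking advantage of the fact that the random times $t_k$ grow at most linearly in $k$. First I would use the lower bound \eqref{D2} on $g$ to control each step: since $\tau_k = h/g(Y^h(t_k)) \le h/l_g$, summing gives
\[
t_k = \sum_{j=0}^{k-1}\tau_j \le \frac{h}{l_g}\,k =: C k, \qquad k\ge 0 .
\]
Thus $(t_k)$ is non-decreasing and bounded above by $Ck$, so it either converges to a finite limit or diverges to $+\infty$; I would handle these two possibilities separately, and in both the conclusion $B(t_k)/k\to 0$ will follow.

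If $\sum_k \tau_k < \infty$, then $t_k \uparrow T_\infty$ for some finite $T_\infty$, and because the paths of $B$ are a.s. continuous (equivalently, by Lemma \ref{tckolmogorov}), $B(t_k)\to B(T_\infty)$; the numerator is therefore bounded while $k\to\infty$, and $B(t_k)/k\to 0$ trivially. If instead $t_k\to\infty$, I would invoke the law of the iterated logarithm for fractional Brownian motion,
\[
\limsup_{t\to\infty}\frac{|B(t)|}{\sqrt{2\,t^{2H}\log\log t}} = 1 \quad \text{a.s.},
\]
which supplies, almost surely, a random threshold $t^\ast(\omega)$ with $|B(t)| \le 2\,t^{H}\sqrt{2\log\log t}$ for all $t\ge t^\ast$. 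For $k$ large enough that $t_k\ge t^\ast$, using $t_k\le Ck$ together with the eventual monotonicity of $t\mapsto t^{H}\sqrt{2\log\log t}$, I obtain
\[
\frac{|B(t_k)|}{k} \le \frac{2\,(Ck)^{H}\sqrt{2\log\log(Ck)}}{k} = 2\,C^{H}\,k^{H-1}\sqrt{2\log\log(Ck)} \to 0 ,
\]
as $k\to\infty$, since $H<1$. Combining the two cases yields the stated a.s. convergence.

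The only genuinely delicate point is the divergent case: because $(t_k)$ is a \emph{random} sequence that need not land on any prescribed deterministic grid, one cannot merely quote a pointwise asymptotic for $B(t)$ but must control $B$ along the entire random trajectory, which is precisely what the LIL provides. The linear bound $t_k\le Ck$ is the other essential ingredient, as it turns the LIL rate $t_k^{H}\sqrt{\log\log t_k}$ into something of order $k^{H}\sqrt{\log\log k}$, which is $o(k)$ for $H<1$. A minor technical matter is that the LIL is an asymptotic statement, so the finitely many initial indices must be absorbed using the a.s. boundedness of $B$ on compacts; this again follows from Lemma \ref{tckolmogorov}.
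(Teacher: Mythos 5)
Your proposal is correct and follows essentially the same route as the paper's proof: the linear bound $t_k \le (h/l_g)\,k$ from \eqref{D2}, a case split between explosion (bounded $t_k$, handled by path continuity/boundedness of $B$ on compacts) and divergence of $t_k$, and in the latter case the law of the iterated logarithm for fBm applied along the random times, yielding a bound of order $k^{H-1}\sqrt{\log\log k}\to 0$. The only cosmetic difference is that you invoke the two-sided LIL for $|B|$ directly, whereas the paper uses the one-sided LIL together with the fact that $-B$ is again an fBm, which amounts to the same thing.
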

\begin{proof} Let us consider 
	$$\Omega = \displaystyle  \bigcup_{k=1}^\infty \quad [ \{\sup_{l \in \mathbb{N}}  t_l \} \leq k] \quad \bigcup \quad [ \{\sup_{l \in \mathbb{N}} t_l \}  = \infty ].$$
Let us first consider the case $\omega \in \displaystyle  \bigcup_{k=1}^\infty \quad [ \{\sup_{l \in \mathbb{N}}  t_l^n(\cdot) \} \leq k] $ (corresponding to the case of the explosion), which is easy to handle. Indeed, in this case, there exists $k \in \mathbb{N}$ such that 
\begin{eqnarray}\label{INE-2-1}
	\frac {|B({t_l(\omega)},\omega)|} {l}   
	&\leq&  \frac {C(\omega , k) k^{H - \delta}}{l} 
\end{eqnarray}
where $C(\omega , k)$ depends only on $k$, $\omega$ and $\delta < H$. 
Consequently, 
$$
\lim_{l \to \infty} \frac {|B({t_l(\omega)},\omega)|} {l}  =0.
$$
Suppose now the case that $\omega  \in \left[ \{\displaystyle \sup_{l \in \mathbb{N}}  t_l \} = \infty \right] $. We recall that for $H\in (0, 1)$ the fBm satisfies the following law of the iterated logarithm (see Lemma 4.1 in \cite{leon}).
	\begin{equation}\label{LIL-fBm}
	\limsup_{t \to \infty}	\frac {B(t)} {\psi_H(t) }
	= 1 \quad \text{a.s.}, 
\end{equation}
 where we use the notation
\begin{equation}
	\psi_H(t) := t^H \sqrt{2 \log \log t}, \quad  t > e^1.
	\end{equation}
Since  the sequence $\{t_l\}_{l \in \mathbb{N}}$  is increasing to infinity, then from \eqref{LIL-fBm}, there exists $\tilde{l} > 0$ such that for all $l > \tilde{l}$ 
\begin{equation}
		\frac {B({t_l(\omega)},\omega)} { t_l^H(\omega) \sqrt{2 \log \log t_l(\omega)}}
	\leq  1+\epsilon < 2
\end{equation}
almost surely. Moreover, for large enough $l$ (depending on $\omega$), we also have $\log(t_l(\omega)) > 1$. This, together with the fact that $-B$ is also a fBm, gives 
\begin{eqnarray}\label{INE-1}
		\frac {|B({t_l(\omega)},\omega)|} {l}   
		&=&  \frac {|B({t_l(\omega)},\omega)|} { t_l^H(\omega) \sqrt{2 \log \log t_l(\omega)}} \times  \frac{ t_l^H(\omega) \sqrt{2 \log \log t_l(\omega)}}{l} \nonumber \\
		&\leq & 2  \frac{ t_l^H(\omega) \sqrt{2 \log \log t_l(\omega)}}{l}. 
\end{eqnarray}
From the definition \eqref{edeftau} and  hypothesis \eqref{D2}, $t_l(\omega) \leq C_1 l$ for some constant $C_1 > h / l_g$ independent  of $l$. Then \eqref{INE-1} becomes\begin{eqnarray}\label{INE-2}
	\frac {|B({t_l(\omega)},\omega)|} {l}   
	&\leq &  \frac{2 C^H_1 l^H \sqrt{2 \log \log t_l(\omega)}}{l} 
	\leq   \frac{2 C^H_1 \sqrt{2 \log \log (C_1l)}}{l^{1-H}}. 
\end{eqnarray}
The claim now follows since the right-hand side converges to zero as $l\to \infty$. This completes the proof.

\qed
\end{proof}

\

\b {\bf Proof of Theorem \ref{Th aproxY} (ii)-(iv): } 

For the claim (ii), it suffices to use Lemma \ref{laux} and follows the proof  \cite[Theorem 1.3]{groisman}. Similarly, for claim (iii), we follow proof  \cite[Proposition 3.2]{groisman}, from which the claim (iv) followed by the arguments of  proof  \cite[Theorem 1.4]{groisman}. This completes the proof.
\qed


\begin{thebibliography}{1}


\bibitem{acosta}

{G. Acosta, R. Duran, J. D. Rossi (2002): } {\em An adaptive time step procedure for a parabolic problem with blow up. } Computing, 68 (4), 343-373.

\bibitem{GA}
{V.V. Ahn and W. Grecksch (1998): }{\em Approximation of stochastic differential equations with modified fractional Brownian motion. } Z. Anal. Anwendungen, 17 (3), 715-727.

\bibitem{groisman} {J. D\'avila, J. Fern\'andez Bonder, J. D. Rossi, P. Groisman, M. Sued (2005): }{\em  Numerical analysis of stochastic differential equations with explosions. }Stochastic Analysis and Applications, 23,  809-826.


\bibitem{GN}
{M. Gradinaru and I. Nourdin (2009): }{\em  Milstein's type scheme for fractional SDEs. } Annales I.H.P.,  45 (4), 1085-1098.


\bibitem{GLT} {M. Gubinelli, A.  Lejay,  and S. Tindel (2006): } {\em  Young
integrals and SPDEs,}  Potential Anal.,  25,  307-326.

\bibitem{HTV} {M. Hinz, J. T\"olle, and L. Viitasaari (2022): } {\em  Sobolev regularity of occupation measures and paths, variability and compositions,}  Electronic Journal of Probability, 27(73), 1-29.


    \bibitem{HN} { Y.Z. Hu  and D. Nualart (2009): }{\em  Stochastic
heat equation driven by fractional noise and local time.} {
Probab. Theory Rel. Fields}  143, 285-328.


  \bibitem{KS}  {Y. Karatzas and S. Shreve (1991): } Brownian motion and stochastic calculus. Springer, New York.


    \bibitem{KP}
    {P. Kloeden and E. Platen (1992): } Numerical solutions of stochastic differential equations. Springer, Berlin.

    \bibitem{KoPr}
    {A. Kohatsu-Higa and P. Protter (1994): } {\em The Euler scheme for SDE's driven by semimartingales. } Stochastic Analysis on infinite dimensional spaces, Pitman Research Notes in Math. Ser. 310, 141-151.

\bibitem{leon2}{J. A. Le\'on, L. Peralta Hern\'andez, J. Villa (2013): }
{\em On the Distribution of Explosion Time of Stochastic Differential Equations. } Bolet\'in de la Sociedad Matem\'atica Mexicana
Volumen 3 Número 19, 125-138.

\bibitem{leon} { J. A. Le\'on, J. Villa (2011): } {\em  An Osgood criterion for integral equations with applications to stochastic differential equations with an additive noise. } Stat. Prob. Let.,  4,  470-477.

\bibitem{MK}{ H.P. McKean (1969):} {\em Stochastic Integrals. Academic Press.}

\bibitem{mishura} {Y. Mishura, G. Shevchenko (2008): }{\em  The rate of convergence for Euler approximations of solutions of stochastic differential equations driven by fractional Brownian motion. } Stochastics 80 (5), 489-511.

\bibitem{narita} {K. Narita (2007): }{\em  Nonexplosion Criteria for Solutions of SDE with Fractional Brownian Motion, } Stochastic Analysis and Applications, 25,  77-88.

\bibitem{Neuenkirch} A. Neuenkirch (2008) {\em Optimal pointwise approximation of stochastic
differential equations driven by fractional Brownian
motion,} Stochastic Processes and their Applications, 118, 2294--2333.

    \bibitem{No}
{I. Nourdin (2006): }{\em Sch\'ema d'approximation associ\'es \`a une \'equation diff\'erentielle dirig\'ee par une fonction h\"olderienne; cas du mouvement brownien fractionnaire. } C.R.A.S., 340 (8), 611-614.



\bibitem{nualart2} {D. Nualart, A. R\u{a}\c{s}canu (2002): }{\em  Differential equations driven by fractional Brownian motion, } Collect. Math. 53 (2002) 55-81.

\bibitem{NoNe}
A. Neuenkirch and I. Nourdin (2007):  {\em Exact rate of convergence of some approximations schemes associated to SDEs driven by a fractional Brownian motion. } J. Theoret. Probab. 20 (4), 871-899.


\bibitem{RW}
{L.C.G. Rogers and D. Williams (2000): } Diffussions, Markov processes and Martingales, Vol. 2, Cambridge University Press.


\bibitem{So-Vi}
T. Sottinen and Viitasaari, L., (2017). {\em Prediction law of fractional Brownian motion}. Statistics and probability letters 129, 155– 166. https://doi.org/10.1016/j.spl.2017.05.006  
\bibitem{XZL}
{J. Xu, Y.M. Zhu and J.C. Liu (2012): }{\em Uniqueness and Explosion Time of Solutions of Stochastic Differential
Equations Driven by Fractional Brownian Motion. } Acta Mathematica Sinica, English Series, 28 (12), 2407-2416.

\bibitem{Za}
{M. Zaehle (1998): }{\em Integration with respect to fractal functions and stochastic calculus. } Probability Theory and Related Fields, 111, 333-374.

\bibitem{zhang}
{Y.H. Zhang and L.J. Wang (1994): }{\em Stochastic analysis of fatigue crack growth for metallic structures.} International Journal of Fracture, 66, 197-203.

\bibitem{zheng}
{R. Zheng and B. R. Ellingwood(1998): }{\em Stochastic Fatigue Crack Growth in Steel Structures Subjected to Random Loading.} U.S. Army Engineer Waterways Experiment Station.

\end{thebibliography}
\end{document}